\theoremstyle{plain}
\newtheorem{theorem}{Theorem}[section]
\newtheorem{definition}[theorem]{Definition}
\newtheorem{prop}[theorem]{Proposition}
\newtheorem{rem}[theorem]{Remark}
\newcommand\B{{\mathbb B}}
\newcommand\C{{\mathbb C}}
\newcommand\Iw{{\mathbb I}(3)}
\newcommand\GL{{\hbox{\rm GL}}}
\newcommand\CP{{\mathbb {CP}}}
\newcommand\R{{\mathbb R}}
\newcommand\Z{{\mathbb Z}}
\newcommand\T{{\mathbb T}} \newcommand\Proj{{\mathbb P}}
\begin{document}
\title[Blow-ups and resolutions of strong K\"ahler with torsion metrics]{Blow-ups and resolutions of strong K\"ahler with torsion metrics}
\author{Anna Fino and Adriano Tomassini}
\date{\today}
\address{Dipartimento di Matematica \\ Universit\`a di Torino\\
Via Carlo Alberto 10\\
10123 Torino\\ Italy} \email{annamaria.fino@unito.it}
\address{Dipartimento di Matematica\\ Universit\`a di Parma\\ Viale G.P. Usberti 53/A\\
43100 Parma\\ Italy} \email{adriano.tomassini@unipr.it}
\subjclass[2000]{53C15, 22E25, 58U25}
\keywords{Hermitian metric; torsion; current; blow-up; orbifold}
\thanks{This work was supported by the Projects MIUR ``Riemannian Metrics and Differentiable Manifolds'',
``Geometric Properties of Real and Complex Manifolds'' and by
GNSAGA of INdAM}
\begin{abstract} On a compact complex manifold we study the behaviour of strong K\"ahler with torsion (strong KT) structures under small deformations
of the complex structure and the problem of extension of a strong
KT metric. In this context we obtain the analogous result of
Miyaoka extension theorem. Studying the blow-up of a strong KT
manifold at a point or along a complex submanifold, we prove that
a complex orbifold endowed with a strong KT metric admits a strong
KT resolution. In this way we obtain new examples of compact simply-connected strong KT manifolds.
\end{abstract}
\maketitle
\section{Introduction}
Let $(M,J,g)$ be a Hermitian manifold of complex dimension $n$. By
\cite{Ga2} there is a $1$-parameter family of canonical Hermitian
connections on $M$ which can be distinguished by properties of
their torsion tensor $T$. In particular, there exists a unique
connection $\nabla^B$ satisfying $\nabla^B g=0$, $\nabla^B J=0$
for which $g(X,T(Y,Z))$ is totally skew-symmetric. The resulting
3-form can then be identified with $JdF$, where $F (\cdot, \cdot)
= g(J \cdot, \cdot)$ is the fundamental 2-form associated to the
Hermitian structure $(J, g)$. This connection was used by Bismut
in \cite{Bi} to prove a local index formula for the Dolbeault
operator when the manifold is non-K\" ahler. The properties of
such a connection are related to what is called \lq\lq {\em
K\"ahler with torsion geometry}\rq\rq\, and if $Jd F$ is closed,
or equivalently if $F$ is $\partial \overline \partial$-closed,
then the Hermitian structure $(J, g)$ is strong KT and $g$ is
called a {\em strong KT} or a {\em{pluriclosed}} metric. The
strong KT metrics have also applications in type II string theory
and in 2-dimensional supersymmetric $\sigma$-models \cite{GHR,Str}
and have relations with generalized K\"ahler structures (see for
instance \cite{Gu,Hi2,AG,FT}).

The condition $\partial \overline \partial F =0$ is obviously
satisfied if $d F =0$, i.e. if $g$ is a K\"ahler metric. The
interesting strong KT metrics for us are those ones which are not
K\"ahler and therefore it is natural to investigate which
properties that hold for K\"ahler manifolds can be generalized in
the context of strong KT geometry. \newline In view of this, in
the present paper, we study in particular the behaviour of strong
KT structures under small deformations of the complex structure,
the blow-up of a strong KT manifold at a point or along a complex
submanifold and the problem of extension of a strong KT metric on
a complex manifold.

The theory about strong KT manifolds in complex dimension at least
three is completely different from that one on complex surfaces.
Indeed, on a complex surface a Hermitian metric satisfying the
strong KT condition is \lq \lq \emph{standard}\rq\rq\, in the
terminology of Gauduchon \cite{Ga} and there exists a standard
metric in the conformal class of any given Hermitian metric on a
compact manifold. Therefore on a complex surface the strong KT
condition is stable under small deformations of the complex
structure.

Examples of compact strong KT manifolds of complex dimension three
are given by nilmanifolds, i.e. compact quotients of nilpotent Lie
groups by uniform discrete subgroups (see \cite{FPS}). It is well
known that these manifolds are not formal in the sense of
\cite{Sul} and cannot admit any K\"ahler metric unless they are
tori (see \cite{BG,DGMS,Ha2}). More precisely, in \cite{FPS} it
was showed that if a nilmanifold of real dimension $6$ admits a
strong KT structure then the nilpotent Lie group has to be
$2$-step and therefore the nilmanifold has to be the total space
of a torus bundle over a torus.\newline One of the examples found
in \cite{FPS} is the {\em Iwasawa manifold}, which can also be
viewed as the total space of a ${\mathbb T}^2$-bundle over the
torus ${\mathbb T}^4$. In contrast with the Kodaira-Spencer
stability theorem \cite{KS} and the case of complex surfaces, in
Section \ref{deformations} we prove that on this manifold the
condition strong KT is not stable under small deformations of the
complex structure.

As in the K\"ahler case, in Section \ref{blowup} we prove that the
blow-up of a strong KT manifold at a point is still strong KT and
more in general

\smallskip
\noindent {\bf Proposition 3.2} {\em Let $M$ be a complex manifold
endowed with a strong KT metric $g$. Let $Y\subset M$ be a compact
complex submanifold. Then the blow-up $\tilde{M}_Y$ of $M$ along
$Y$ has a strong KT metric.}

\smallskip
\noindent This result allows, starting from a strong KT manifold,
to construct a new one.

The K\"ahler condition for a Hermitian metric can be characterized
in terms of positive currents. Indeed, Harvey and Lawson in
\cite{HL} proved that a compact complex manifold admits a K\"ahler
metric if and only if there is no non-zero positive
$(1,1)$-current which is the $(1,1)$-component of a boundary. By
\cite{Eg} also the strong KT condition can be studied in terms of
positive currents. A result of Miyaoka \cite{Mi} asserts that, if
a (compact) complex manifold $M$ has a K\"ahler metric in the
complement of a point, then $M$ is itself K\"ahler. By using the
extension result of \cite{AB} about positive or negative
plurisubharmonic currents, in Section \ref{extension} we prove the
analogous result of the Miyaoka's one for strong KT manifolds of
complex dimension $n$ with $n \geq 2$. As an application by using
the previous extension theorem we show the following

\smallskip
\noindent {\bf Theorem 4.3} {\em Let $M$ be a complex manifold of
of complex dimension $n\geq 2$.

\noindent If $M\setminus\{p\}$ admits a strong KT metric, then
there exists a strong KT metric on $M$.}

\smallskip

There are few examples of compact simply-connected strong KT
manifolds; as far as we know, they are given by real compact
semisimple Lie groups of even dimension \cite{SSTV}. Therefore it
is interesting to investigate for new compact simply-connected
strong KT manifolds. A natural way to obtain these is to consider
resolutions of orbifolds and the typical example of orbifold is
given by the quotient of a manifold by an action of a finite group
with non-identity fixed point sets of codimension at least two
(see \cite{Satake}). In Section \ref{sectionorbifolds} we study
resolutions of strong KT orbifolds. By using the result by
Hironaka \cite{Hiro} that a complex orbifold admits a resolution,
which is a obtained by a finite sequence of blow-ups and the
results obtained about blow-ups, we prove the following

\smallskip

\noindent {\bf Theorem 5.4} {\em Let $(M,J)$ be a complex orbifold
of complex dimension $n$ endowed with a $J$-Hermitian strong KT
metric $g$. Then there exists a strong KT resolution.}

\smallskip
\noindent In the last two sections we apply this result to complex
orbifolds constructed considering an action of a finite group on a
torus, on a product of a Kodaira-Thurston surface with the
$4$-dimensional torus $\T^4$ and on a product of two
Kodaira-Thurston surfaces. The strong KT resolutions that we get
have a simpler topology with respect to the one of the three
$8$-dimensional nilmanifolds. Indeed, in the case of the torus we
are able to construct a new compact simply-connected strong KT
manifold and in the other two cases the strong KT resolutions have
first Betti number equal to one. Resolutions for quotients of tori
have been already considered by Joyce in order to obtain
simply-connected compact manifolds with exceptional holonomy $G_2$
and $Spin (7)$ in \cite{J}. Moreover, in \cite{FM} and \cite{CFM}
orbifolds constructed starting with nilmanifolds have been
recently used in order to get simply-connected compact non-formal
symplectic manifolds.
\smallskip

\noindent {\em{Acknowledgements}} We would like to thank Alberto
Della Vedova, Simon Salamon and Stefano Trapani for useful
comments and stimulating conversations. We also would
like to thank the referee for valuable remarks.
\section{Small deformations of strong KT metrics}\label{deformations}
We will recall some basic definitions and fix some notation. Let
$(M,J)$ be a complex manifold of complex dimension $n$ and
decompose as usual $d=\partial + \overline{\partial}$. Let $g$ be
a Hermitian metric on $(M,J)$. The {\em fundamental $2$-form} $F$
is then defined by
$$
F(X , Y) = g(JX,Y)
$$
and has type $(1,1)$ relative to the complex structure $J$.
\begin{definition}
The Hermitian metric $g$ on $(M, g)$ is said to be {\em strong
K\"ahler with torsion}, or shortly, {\em strong KT}, if
\begin{equation}\label{definitionstrongKT}
\partial\overline{\partial}F=0\,.
\end{equation}
\end{definition}
\noindent Clearly, condition \eqref{definitionstrongKT} is weaker
than the K\"ahler one and the previous definition includes for us
the K\"ahler metrics.\newline In this section we will explore the
stability of strong KT metrics under infinitesimal deformations of
the complex structures.\newline By a well known result by
Kodaira-Spencer \cite{KS} the K\"ahler condition for a Hermitian
metric is stable under small deformations of the complex structure
underlying the K\"ahler structure. We will show that this does not
hold for strong KT structures in real dimension higher than or
equal to six. In real dimension $4$ a strong KT metric is standard
in the terminology of \cite{Ga} and therefore the result holds.

Indeed, we recall that a Hermitian structure $(J, g)$ on a
manifold $M$ of real dimension $2n$ is called {\em standard} in
the terminology of \cite{Ga} if
$$
\partial \overline \partial F^{n-1} =0,
$$
or equivalently if the Lee form $\theta = -J* d * F $ is
co-closed, where
$$
F^{n -1}= \underbrace{F \wedge \ldots \wedge
F}_{(n-1)-{\mbox{times}}}\,.
$$
In particular, if $\theta =0$ the Hermitian structure is said to
be {\em balanced}.

By \cite{Ga} for a compact complex manifold a standard metric can
be found in the conformal class of any given Hermitian metric.
Since on a complex surface a strong KT metric is standard, a small
deformation of the complex structure on a complex surface
preserves the strong KT condition. In higher dimensions the strong
KT condition is not anymore equivalent to the standard one, in
fact by \cite{AI} a strong KT metric is standard only if
$$
\vert dF \vert^2 = (n- 1) \vert \theta \wedge F \vert^2,
$$
where $\theta$ is the Lee form of the Hermitian structure $(J, g)$
and by $\vert \cdot \vert$ we denote the norm of the form.
Therefore, if $n > 2$ a strong KT metric is not necessarily
standard. An example of compact strong KT manifold of complex
dimension three is given by the Iwasawa manifold $\Iw$, which is
the compact quotient of the complex Heisenberg group
$$
H_3^{\C} = \left\{ \left( \begin{array}{ccc} 1&z_1&z_3\\ 0&1&z_2\\
0&0&1 \end{array} \right)\,\,\vert\,\, z_j \in \C \, , j=1,2,3
\right\}
$$
by the uniform discrete subgroup $\Gamma$ for which $z_j$ are
Gaussian integers. By \cite[Theorem 1.2]{FPS} for this manifold
and more in general for any nilmanifold of complex dimension three
the strong KT condition depends only on the underlying complex
structure. This allows us in contrast with the case $n =2$ to
prove the following
\begin{theorem}
On the Iwasawa manifold $\Iw = \Gamma \backslash H_3^{\C}$ the
condition for a Hermitian metric to be strong KT is not stable
under small deformations of the complex structure underlying the
strong KT structure.
\end{theorem}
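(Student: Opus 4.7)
The plan is to reduce the question to a finite-dimensional one on the nilpotent Lie algebra via \cite[Theorem 1.2]{FPS}, and then to exhibit a curve of invariant complex structures $J_t$ near the strong KT structure $J_0$ such that each $(\Iw,J_t)$ with $t\neq 0$ admits no invariant strong KT metric.

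Concretely, let $\{\alpha^1,\alpha^2,\alpha^3\}$ be a left-invariant coframe of $J_0$-holomorphic $(1,0)$-forms on $\mathfrak g=\operatorname{Lie}(H_3^{\C})$. By \cite[Theorem 1.2]{FPS}, the existence of a strong KT metric on $(\Iw,J)$ reduces, for any invariant $J$, to the existence of a left-invariant one, so we may work exclusively with invariant Hermitian forms $F=\tfrac{i}{2}\sum g_{i\bar j}\,\alpha^i\wedge\bar\alpha^j$, where $(g_{i\bar j})$ is a positive-definite Hermitian $3\times 3$ matrix. Under the hypothesis, some such $F_0$ satisfies $\partial\bar\partial F_0=0$.

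To build the deformations I invoke Kuranishi's construction applied to the left-invariant locus: small integrable invariant $J_t$ are parametrized by solutions $\phi_t\in \Lambda^{0,1}(\mathfrak g^*)\otimes T^{1,0}(\mathfrak g)$ of the Maurer--Cartan equation $\bar\partial\phi_t+\tfrac{1}{2}[\phi_t,\phi_t]=0$. I will write an explicit one-parameter family $\phi_t=t\phi+O(t^2)$, so that in the deformed coframe $\alpha^i_t=\alpha^i+t\,\psi^i{}_{\bar j}\,\bar\alpha^j$ the structure constants become polynomial functions of $t$ with prescribed non-trivial leading term. For each $J_t$ I then parametrize an arbitrary invariant $J_t$-Hermitian $(1,1)$-form $F_t$ by a positive Hermitian matrix $(g_{i\bar j}(t))$ and compute $\partial_t\bar\partial_t F_t$, using the unchanged de Rham differential together with the $t$-dependent bigrading.

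The outcome is expected to be that the equation $\partial_t\bar\partial_t F_t=0$, viewed as a linear system in the entries of $(g_{i\bar j}(t))$, becomes inconsistent with positive-definiteness for any $t\neq 0$ small: some diagonal coefficient, say $g_{3\bar 3}(t)$, is forced to vanish. Combined with \cite[Theorem 1.2]{FPS} this yields the non-existence of any strong KT metric on $(\Iw,J_t)$ and hence instability. The main obstacle will be selecting a deformation direction $\phi$ for which the obstruction, read in a fixed basis of $\Lambda^4\mathfrak g^*_{\C}$, cannot be absorbed by any Hermitian parameters; this is a concrete but somewhat delicate computation, because the $J_t$-bigrading shuffles the Hodge types of the unchanged structure equations so that terms of different types in $J_0$ become coupled in $J_t$.
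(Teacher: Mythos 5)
Your overall strategy coincides with the paper's: reduce to left-invariant Hermitian forms on the Lie algebra (the averaging statement you need is the one the paper quotes from \cite[Proposition 3.1]{Ug} and \cite{FG}; \cite[Theorem 1.2]{FPS} is the closely related fact that for these nilmanifolds the strong KT condition depends only on the complex structure), and then exhibit a curve of invariant complex structures through a strong KT one along which the invariant strong KT equation becomes unsolvable. But as written your argument has a genuine gap: you never produce the deformation direction $\phi$, and you only assert that the outcome is ``expected'' to be a system inconsistent with positive-definiteness. The existence of a direction in which the obstruction cannot be absorbed by the Hermitian parameters is precisely the content of the theorem --- instability is not automatic, since for other nilmanifolds (the paper's second family, isomorphic to the Lie algebra of $H_3\times\R^3$) every member of an analogous deformation family remains strong KT. Deferring the choice of $\phi$ and the verification of non-absorbability to a ``delicate computation'' therefore leaves the entire mathematical content of the statement unproved.

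The paper closes exactly this gap with a completely explicit family that bypasses Kuranishi theory: rather than deforming $J$ on a fixed Lie algebra via Maurer--Cartan, it fixes the almost complex structure $Je^1=e^2$, $Je^3=e^4$, $Je^5=e^6$ and varies the structure constants, $de^5=t(e^1\wedge e^2+2e^3\wedge e^4)+s(e^1\wedge e^3-e^2\wedge e^4)$, $de^6=s(e^1\wedge e^4+e^2\wedge e^3)$, all isomorphic to the complex Heisenberg algebra for $s\neq 0$. One then has $d\varphi^1=d\varphi^2=0$ and $d\varphi^3=-\tfrac{i}{2}t(\varphi^1\wedge\overline\varphi^1+2\varphi^2\wedge\overline\varphi^2)+s\,\varphi^1\wedge\varphi^2$, so each $J_{t,s}$ is integrable, and \cite[Lemma 1.3]{FPS} gives $\partial\overline\partial(\varphi^3\wedge\overline\varphi^3)=(t^2-s^2)\,\varphi^1\wedge\varphi^2\wedge\overline\varphi^1\wedge\overline\varphi^2$; since the closed forms $\varphi^1,\varphi^2$ contribute nothing, an invariant compatible strong KT metric exists if and only if $t^2=s^2$ (this is your ``$g_{3\overline 3}$ forced to vanish'' phenomenon, here verified rather than conjectured). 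Starting at $t=s=1$, which is strong KT, and perturbing off the locus $t^2=s^2$ proves the theorem. To complete your write-up you must supply an equally explicit deformation and carry out the corresponding computation; the rest of your outline is sound.
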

In order to prove the theorem we will construct an explicit
deformation of a complex structure underlying a strong KT
structure that does not remain strong KT.

Let ${\mathfrak n}_{t,s}$ be the family of $2$-step nilpotent Lie
algebras with structure equations
$$
\left\{
\begin{array}{l}
d e^i =0, \quad i = 1, \ldots, 4\,,\\[5pt]
d e^5 = t(e^{1} \wedge e^{2} + 2 \, e^{3} \wedge e^{4}) + s (e^{1} \wedge e^{3} - e^{2} \wedge e^{4}),\\[5pt]
d e^6 = s (e^{1} \wedge e^{4} + e^{2} \wedge e^{3}),
\end{array}
\right.
$$
with $t$ and $s$ real numbers and $s \neq 0$. This family was
already considered in \cite{FG} for Hermitian structures whose
Bismut connection has holonomy in $SU(3)$ and it was proved that
for any $t$ and $s \neq 0$ the Lie algebra ${\mathfrak n}_{t,s}$
is isomorphic to the Lie algebra of the complex Heisenberg group
$H_3^{\C}$ with structure equations
$$
\left\{
\begin{array}{l}
d e^i =0, \quad i = 1, \ldots, 4\,,\\[5pt]
d e^5 =e^{1} \wedge e^{3} - e^{2} \wedge e^{4},\\[5pt]
d e^6 = e^{1} \wedge e^{4} + e^{2} \wedge e^{3},
\end{array}
\right.
$$
(compare also \cite[Example 6.1 and 6.5]{Lau}).

Take the almost complex structure $J$ on ${\mathfrak n}_{t,s}$
given by
\begin{equation} \label{complrealHeis}
J e^1 = e^2, \, J e^3 = e^4, \, J e^5 = J e^6.
\end{equation}
For the associated $(1,0)$-forms
$$
\varphi^1 = e^1 + i e^2\,, \quad \varphi^2 = e^3 + i e^4\,, \quad \varphi^3
= e^5 + i e^6,
$$
we have that
$$
\begin{array} {l}
d \varphi^i =0\,, \quad i =1,2\,,\\[5pt]
d \varphi^3 = - \frac 12 i\, t \, (\varphi^1 \wedge \overline
\varphi^1 + 2 \varphi^2 \wedge \overline \varphi^2) + s \,
\varphi^1 \wedge \varphi^2,
\end{array}
$$
and therefore $J$ is integrable.

In this way the Iwasawa manifold $\Iw = \Gamma \backslash
H_3^{\C}$ is endowed with a family of complex structures
$J_{t,s}$, with $t, s \in \R$ and $s \neq 0$.

Note that for $t =0$ and $s =1$ the complex structure $J$
coincides with the bi-invariant complex structure $J_0$ on the
complex Heisenberg group. The complex structure $J_0$ cannot admit
any compatible strong KT metric, since otherwise it has to be
balanced and by \cite{FPS} the balanced condition is complementary
to the strong KT one.

We will show that the Iwasawa manifold $(\Iw, J_{t,s})$ admits a
strong KT metric compatible with $J_{t, s}$ if and only if $t^2
=s^2$. Indeed, by \cite[Lemma 1.3]{FPS}, if $g$ is a
left-invariant Riemannian metric compatible with $J_{t,s}$, $g$ is
strong KT if and only if
$$
\partial \overline \partial\, (\varphi^3 \wedge \overline \varphi^3) =0 = (t^2 - s^2)
\varphi^1 \wedge \varphi^2 \wedge \overline \varphi^1 \wedge
\overline \varphi^2.
$$
By \cite[Proposition 3.1]{Ug} and \cite{FG} if there exists a
non-left-invariant strong KT metric compatible with $J_{t,s}$,
then there is also a left-invariant one. Therefore this is only
possible if $t^2 = s^2$.

Thus if $t = s =1$ the Iwasawa manifold has a strong KT metric $g$
compatible with $J_{1,1}$, but for any $t \neq s \neq 1$ there
exists no a strong KT metric compatible with the complex structure
$J_{t,s}$.

In a similar way one can show that generically on a nilmanifold of
complex dimension three the condition strong KT is not stable
under small deformations of the underlying complex structure, but
one can also construct a family of strong KT structures. For
instance consider the family of Lie algebras ${\mathfrak n}_{t,s}$
with structure equations
$$
\left\{
\begin{array}{l}
d e^i=0\,, \quad i =1, \ldots, 5\,,\\[5pt]
d e^6 = t^2 \, e^1 \wedge e^2 + ts \, (e^1 \wedge e^4 - e^2 \wedge
e^3) + s^2 \, e^3 \wedge e^4.
\end{array}
\right.
$$
For any real numbers $t,s$ such that $t^2 + s^2 \neq 0$ the Lie
algebra ${\mathfrak n}_{t,s}$ is isomorphic to the Lie algebra of
$H_3 \times \R^3$, where $H_3$ is the real $3$-dimensional
Heisenberg Lie group. Moreover, for any $t,s$ the complex
structure $J$ defined by \eqref{complrealHeis} gives rise to a
strong KT structure.
\section{Blow-up of strong KT manifolds}\label{blowup}
We start by proving that the blow-up of a strong KT manifold at a
point is still strong KT, as in the K\"ahler case (see for example
\cite{BL}).
\begin{prop}\label{blowuppoint}
Let $(M,J,g)$ be a strong KT manifold of complex dimension $n$ and
$\tilde M_p$ be the blow-up of $M$ at a point $p\in M$. Then
$\tilde M_p$ admits a strong KT structure.
\end{prop}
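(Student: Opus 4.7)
\smallskip
\noindent\textbf{Proof plan.} The plan is to imitate the classical K\"ahler construction on the blow-up (cf.\ \cite{BL}), the point being that that construction modifies $\pi^{*}F$ by an $i\partial\overline\partial$-exact correction which is automatically $\partial\overline\partial$-closed. Since the strong KT condition is exactly $\partial\overline\partial$-closedness of the fundamental form, the same recipe should go through in the present setting.

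Let $\pi\colon\tilde M_p\to M$ be the blow-up map and $E=\pi^{-1}(p)\cong\CP^{n-1}$ the exceptional divisor. The pullback $\pi^{*}F$ is a smooth real $(1,1)$-form on $\tilde M_p$, positive definite on $\tilde M_p\setminus E$, degenerate along $E$, and it satisfies $\partial\overline\partial(\pi^{*}F)=\pi^{*}(\partial\overline\partial F)=0$. Choose a coordinate ball $U\subset M$ around $p$; on the blow-up $\tilde U$ of $U$ at the origin the classical construction for the blow-up of a ball in $\C^{n}$ yields a smooth real function $\phi$ on $\tilde U$ such that $i\partial\overline\partial\phi$ is a K\"ahler form on $\tilde U$ whose restriction to $E$ is, up to a scalar, the Fubini--Study form.

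Pick a smooth cutoff $\chi$ on $\tilde M_p$ with compact support in $\tilde U$ and $\chi\equiv 1$ on a neighbourhood $V$ of $E$, and for a small parameter $\epsilon>0$ set
\[
\tilde F_{\epsilon}\;:=\;\pi^{*}F\;+\;\epsilon\,i\partial\overline\partial(\chi\phi).
\]
This is a globally defined real $(1,1)$-form on $\tilde M_p$. The strong KT condition is immediate:
\[
\partial\overline\partial \tilde F_{\epsilon}\;=\;\partial\overline\partial(\pi^{*}F)\;+\;\epsilon\,i\,\partial\overline\partial\bigl(\partial\overline\partial(\chi\phi)\bigr)\;=\;0,
\]
since $\partial^{2}=\overline\partial^{2}=0$. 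For positivity: on $V$ one has $\tilde F_{\epsilon}=\pi^{*}F+\epsilon\,i\partial\overline\partial\phi$, and its restriction to $E$ equals $\epsilon\,i\partial\overline\partial\phi|_{E}>0$, so by continuity and compactness of $E$ positivity persists on a (possibly smaller) neighbourhood of $E$ for every small $\epsilon$. On the compact transition set where $\chi\in(0,1)$, $\pi^{*}F$ is bounded below by a positive multiple of a reference metric (since $\pi$ is a biholomorphism there onto a relatively compact piece of $M\setminus\{p\}$), hence for $\epsilon$ small enough the bounded correction $\epsilon\,i\partial\overline\partial(\chi\phi)$ cannot spoil positivity. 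Outside $\operatorname{supp}\chi$ one has $\tilde F_{\epsilon}=\pi^{*}F>0$, since one is already in $\tilde M_p\setminus E$. Thus $\tilde F_{\epsilon}$ is the fundamental form of a Hermitian strong KT metric on $\tilde M_p$.

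The key subtlety, which is what makes the strong KT analogue of the K\"ahler argument work, is the correct placement of the cutoff: $\chi$ must appear \emph{inside} the $i\partial\overline\partial$, as in $i\partial\overline\partial(\chi\phi)$. The naive alternative $\chi\,(i\partial\overline\partial\phi)+(1-\chi)\pi^{*}F$ would produce terms involving $\partial\chi\wedge\overline\partial\phi$ and $\partial\overline\partial\chi\wedge(\cdots)$ that destroy $\partial\overline\partial$-closedness. Once the cutoff is placed correctly, the only remaining work is the routine $\epsilon$-balancing of positivity, exactly as in the K\"ahler case.
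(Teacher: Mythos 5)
Your overall strategy is exactly the paper's: add to $\pi^{*}F$ a small multiple of $i\partial\overline\partial$ of (cutoff $\times$ potential), with the cutoff placed \emph{inside} the $i\partial\overline\partial$ so that $\partial\overline\partial$-closedness is automatic, and then balance $\epsilon$ for positivity. The paper uses precisely the potential $(p_1^{*}h)\,p_2^{*}\log\Vert\cdot\Vert^{2}$ on $U\times(\C^{n}\setminus\{0\})$ restricted to the incidence variety, which is your $\chi\phi$ with $\phi=\log\Vert z\Vert^{2}$.

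There is, however, one genuine inaccuracy in your input data. You assert that the classical construction produces a smooth function $\phi$ on $\tilde U$ with $i\partial\overline\partial\phi$ a \emph{K\"ahler form} on $\tilde U$. No such $\phi$ can exist for $n\geq 2$: a strictly plurisubharmonic function would restrict to a strictly plurisubharmonic function on the compact positive-dimensional submanifold $E\cong\CP^{n-1}\subset\tilde U$, contradicting the maximum principle. The correct object (the one the paper uses) is $\phi=\log\Vert z\Vert^{2}$, whose $i\partial\overline\partial$ is only \emph{semi}-positive on $\tilde U$ --- it is the pullback of the Fubini--Study form under $\tilde U\to\CP^{n-1}$, hence degenerate transverse to $E$. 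This matters for your positivity argument near $E$: since $\pi^{*}F|_{E}=0$ on $T E$ but not on the transverse directions, you cannot conclude positivity of $\tilde F_{\epsilon}$ on all of $T_{q}\tilde M_{p}$ for $q\in E$ from positivity of $\epsilon\, i\partial\overline\partial\phi|_{TE}$ alone. The standard (and easy) repair is the two-part decomposition at $q\in E$: for $v\neq 0$, either $d\pi(v)\neq 0$, in which case $\pi^{*}F(v,Jv)>0$ and $\epsilon\,i\partial\overline\partial\phi(v,Jv)\geq 0$; or $v\in\ker d\pi_{q}=T_{q}E$, in which case $i\partial\overline\partial\phi(v,Jv)>0$. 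Then compactness of $E$ propagates strict positivity to a neighbourhood, and the rest of your $\epsilon$-balancing on the transition annulus and outside $\operatorname{supp}\chi$ is fine and coincides with the paper's argument.
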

\begin{proof}
Let $z = (z_1, \ldots ,z_n)$ be holomorphic coordinates in an open
set $U$ centered around the point $p \in M$. We recall that the
blow-up $\tilde M_p$ of $M$ is the complex manifold obtained by
adjoining to $M \setminus \{ p \}$ the manifold
$$
\tilde U = \{ (z, l) \in U \times \CP^{n - 1} \, \vert \, z \in l
\}
$$
by using the isomorphism
$$
\tilde U \setminus \{ z =0 \} \cong U \setminus \{ p \}
$$
given by the projection $ (z, l) \to z$. In this way there is a
natural projection $\pi: \tilde M_p \to M$ extending the identity
on $M \setminus \{ p \}$ and the exceptional divisor $\pi^{-1}(p)$
of the blow-up is naturally isomorphic to the complex projective
space $\CP^{n - 1}$.

If we denote by $F$ the fundamental $2$-form associated with the
strong KT metric $g$, then the $2$-form $\pi^* F$ is $\partial
\overline \partial$-closed since $\pi$ is holomorphic, but it is
not positive definite on $\pi^{-1} (M \setminus \{ p \})$. As in
the K\"ahler case, let $h$ be a ${\mathcal C}^{\infty}$-function
having support in $U$, i.e. $0 \leq h \leq 1$ and $h=1$ in a
neighborhood of $p$. On $U \times (\C^n \setminus \{ 0 \})$
consider the $2$-form
$$
\gamma = i \partial \overline \partial \left( (p_1^* h) p_2^* \log
\vert \vert \cdot\vert \vert^2 \right)\,,
$$
where $p_1$ and $p_2$ denote the two projections of $U \times
(\C^n \setminus \{ 0 \})$ on $U$, $\C^n\setminus\{0\}$
respectively.\newline Let $\psi$ be the restriction of $\gamma$ to
$\tilde M_p$. Then there exists a small enough real number
$\epsilon$ such that the $2$-form $\tilde F = \epsilon \psi +
\pi^* F$ is positive definite. Since $\tilde F$ is $ \partial
\overline \partial$-closed, it defines a strong KT metric on the
blow-up $\tilde{M}_p$.
\end{proof}
\noindent As a consequence of Proposition \ref{blowuppoint}, it is possible
to construct new examples of strong KT manifolds by blowing-up a
given strong KT manifold $M$ at one or more points. Moreover, the
homology groups of the two manifolds $M$ and $\tilde M_p$ are
related by
$$
H_i (\tilde M_p) = H_i (M) \oplus H_i (\C P^{n - 1}), \quad i \geq
1.
$$
Note that in view of Proposition \ref{blowupextension}, the
blow-up of the strong KT nilmanifolds given in \cite{FPS} cannot
admit any K\"ahler structure.\newline Proposition
\ref{blowuppoint} can be generalized to the blow-up of a strong KT
manifold along a compact complex submanifold. Indeed
\begin{prop} \label{blowupsubmanifold}
Let $M$ be a complex manifold endowed with a strong KT metric $g$.
Let $Y\subset M$ be a compact complex submanifold. Then the
blow-up $\tilde{M}_Y$ of $M$ along $Y$ has a strong KT metric.
\end{prop}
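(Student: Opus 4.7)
The plan is to imitate the proof of Proposition~\ref{blowuppoint}, with the single local potential $\log\|z\|^{2}$ replaced by a partition-of-unity patching of local potentials along $Y$. The end goal is the same: produce a smooth, $\partial\overline\partial$-closed $(1,1)$-form $\psi$ on $\tilde M_Y$, supported in a neighbourhood of the exceptional divisor $E=\pi^{-1}(Y)$ and strictly positive in the fibre direction of $E\to Y$; then take $\tilde F:=\pi^{*}F+\epsilon\,\psi$ for $\epsilon>0$ small.

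First I would cover a relatively compact neighbourhood of $Y$ by finitely many holomorphic charts $(U_\alpha,z^\alpha_1,\dots,z^\alpha_n)$ in which $Y\cap U_\alpha=\{z^\alpha_1=\cdots=z^\alpha_k=0\}$, where $k$ is the complex codimension of $Y$, and set $\rho_\alpha:=\log(|z^\alpha_1|^{2}+\cdots+|z^\alpha_k|^{2})$. The key local computation is that, in blow-up coordinates on $\tilde U_\alpha:=\pi^{-1}(U_\alpha)$ such as $z^\alpha_1=w,\ z^\alpha_j=w\xi_j\ (j=2,\dots,k)$, one has $\pi^{*}\rho_\alpha=\log|w|^{2}+\log(1+|\xi_2|^{2}+\cdots+|\xi_k|^{2})$; since $\log|w|^{2}$ is pluriharmonic where $w\ne 0$, the form
\[
i\partial\overline\partial(\pi^{*}\rho_\alpha)=i\partial\overline\partial\log\bigl(1+|\xi_2|^{2}+\cdots+|\xi_k|^{2}\bigr)
\]
extends smoothly across $E\cap\tilde U_\alpha$ as the pull-back of the Fubini--Study form along the fibre coordinates, and is strictly positive on the fibres $\CP^{k-1}$ of $E\to Y$.

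For the patching, I would observe that on an overlap $U_\alpha\cap U_\beta$ the first $k$ coordinates transform via an invertible $k\times k$-matrix-valued function, so $\pi^{*}(\rho_\alpha-\rho_\beta)$ extends smoothly across $E$ in blown-up coordinates. Pick a partition of unity $\{\chi_\alpha\}$ subordinate to $\{U_\alpha\}$ with $\sum_\alpha\chi_\alpha\equiv 1$ on an open $V\supset Y$, and a cutoff $h\in\mathcal{C}^{\infty}(M,[0,1])$ equal to $1$ near $Y$ and compactly supported in $V$. I would then define
\[
\psi:=i\partial\overline\partial\Bigl(\sum_\alpha(h\circ\pi)(\chi_\alpha\circ\pi)\,\pi^{*}\rho_\alpha\Bigr)
\]
on $\pi^{-1}(V)$, extended by zero outside $\pi^{-1}(\operatorname{supp} h)$. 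On each $\tilde U_\beta$, using $\sum_\alpha\chi_\alpha\equiv 1$ near $Y$, the bracketed expression equals $(h\circ\pi)\,\pi^{*}\rho_\beta$ plus a smooth function, so $\psi$ is a globally smooth $(1,1)$-form on $\tilde M_Y$, it is tautologically $\partial\overline\partial$-closed, and on $\{h\equiv 1\}$ it coincides with $i\partial\overline\partial(\pi^{*}\rho_\beta)$ up to smooth terms, preserving strict positivity in the fibre direction.

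Finally, I would set $\tilde F:=\pi^{*}F+\epsilon\,\psi$; then $\partial\overline\partial\tilde F=\pi^{*}(\partial\overline\partial F)+\epsilon\,\partial\overline\partial\psi=0$. Since $\pi^{*}F$ is strictly positive off $E$ and degenerates exactly in the fibre direction of $E$, where $\psi$ contributes the fibrewise Fubini--Study form, compactness of $\operatorname{supp}\psi$ (guaranteed by compactness of $Y$) yields $\epsilon_{0}>0$ such that $\tilde F$ is positive definite on $\tilde M_Y$ for all $0<\epsilon<\epsilon_{0}$. The main obstacle will be the bookkeeping required to see that the patched $\psi$ is both globally smooth across $E$ and uniformly positive along the fibres; this rests on the local identification of $i\partial\overline\partial(\pi^{*}\rho_\alpha)$ with the fibre Fubini--Study form, together with the observation that the differences $\rho_\alpha-\rho_\beta$ become smooth after blow-up, so the partition-of-unity construction introduces only smooth corrections.
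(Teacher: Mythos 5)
Your argument is correct, and its skeleton is the same as the paper's: build a $\partial\overline\partial$-closed $(1,1)$-form on $\tilde M_Y$, supported near the exceptional divisor $E$ and strictly positive on the fibres of $E\to Y$, then add a small multiple of it to $\pi^*F$ and use compactness of $Y$ to get positivity. Where you diverge is in how that correction form is produced. The paper does not patch local potentials: it takes the holomorphic line bundle $L$ on $\tilde M_Y$ which is trivial off $\pi^{-1}(Y)$ and restricts to $\mathcal{O}_{\Proj(\mathcal{N}_{Y\vert M})}(1)$ on $\pi^{-1}(Y)\cong\Proj(\mathcal{N}_{Y\vert M})$, extends a Hermitian metric of that restriction to a metric on $L$ which is flat outside a compact neighbourhood of $E$ (the partition of unity is applied to the metric, not to potentials), and uses $\epsilon$ times the resulting Chern form $\hat\omega$. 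Your construction is the explicit chart-level incarnation of the same object: the weights $e^{-\pi^*\rho_\alpha}$ are, up to the smooth factors you compute, local expressions of a Hermitian metric on that line bundle, and your check that $\pi^*(\rho_\alpha-\rho_\beta)$ extends smoothly across $E$ is precisely the cocycle verification that the paper absorbs into the existence of $L$. Your route is more elementary and continues the proof of Proposition \ref{blowuppoint} verbatim (the single potential $\log\|z\|^2$ becoming your family $\rho_\alpha$); the paper's route is shorter and yields a globally \emph{closed} correction form with no singular potentials to handle. One point you should make explicit: the patched potential is not smooth on $\tilde M_Y$ (it has a $\log|w|^2$ pole along $E$), so $\psi$ must be defined as the smooth extension across $E$ of the form computed on $\tilde M_Y\setminus E$, with $\partial\overline\partial\psi=0$ holding there and hence everywhere by continuity; as stated, ``tautologically $\partial\overline\partial$-closed'' glosses over this, but it is not a genuine gap.
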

\begin{proof}
Let $\pi :\tilde{M}_Y\to M$ be the holomorphic projection. By
construction $\pi:\tilde{M}\setminus\pi^{-1}(Y)\to M\setminus Y$
is a biholomorphism and $\pi^{-1}(Y)\cong\Proj(\mathcal{N}_{Y\vert
M})$, where $\Proj(\mathcal{N}_{Y\vert M})$ is the projectified of
the normal bundle of $Y$. Let $F$ be the fundamental $2$-form of
the strong KT metric on $M$. There exists a holomorphic line
bundle $L$ on $\tilde{M}_Y$ such that $L$ is trivial on
$\tilde{M}_Y\setminus\pi^{-1}(Y)$ and such that its restriction to
$\pi^{-1}(Y)$ is isomorphic to
$\mathcal{O}_{\Proj(\mathcal{N}_{Y\vert M})}(1)$. \newline Let $h$
be a Hermitian structure on
$\mathcal{O}_{\Proj(\mathcal{N}_{Y\vert M})}(1)$ and $\omega$ be
the corresponding Chern form. Let $\left\{U_i\right\}_{i\in I}$ be
an open covering of $M$ which trivializes the line bundle $L$. By
using a partition of unity subordinate to
$\left\{U_i\right\}_{i\in I}$, it follows that the metric $h$ can
be extended to a metric structure $\hat{h}$ on $L$, in such a way
that $\hat{h}$ is the flat metric structure on the complement of a
compact neighborhood $W$ of $Y$ induced by the trivialization of
$L$ on $\tilde{M}_Y\setminus\pi^{-1}(Y)$. Therefore, the Chern
curvature $\hat{\omega}$ of $L$ vanishes on $M\setminus W$ and
$\hat{\omega}\vert_{\Proj(\mathcal{N}_{Y\vert M})}=\omega$.
\newline Hence, since $Y$ is compact, there exists $\epsilon
\in\R,\,\epsilon >0$, small enough, such that
$$
\tilde{F} =\pi^* F +\epsilon\,\hat{\omega}
$$
is positive definite. Moreover, $\partial\overline{\partial}\,
\tilde{F}=0$, so that $\tilde{F}$ gives rise to a strong KT metric on
$\tilde{M}_Y$.
\end{proof}
\noindent By \cite{Bl}Ê the cohomology groups of the two manifolds $M$ and
$\tilde M_{Y}$ are related by
$$
H^*(\tilde M_Y) = \pi^* H^* (M) \oplus H^*
(\Proj(\mathcal{N}_{Y\vert M})) / \pi^* H^*(Y)
$$
and therefore for the corresponding Poincar\'e polynomials we have
$$
P_{\tilde M_Y} (t) = P_{M} (t) + P_Y (t) \left( \sum_{j =1}^{n - k
- 1} t^{2j} \right),
$$
where $k = \dim_{\C} Y$. In particular, for the first Betti number
we get $$b_1 (\tilde M_Y) = b_1 (M).
$$
\section{Extension of strong KT metrics} \label{extension}
We start by fixing some notation and recalling some known facts on
positive currents. For our purposes it is enough to consider an
open set $\Omega\subset\C^n$. \newline Denote by
$\Lambda^{p,q}(\Omega)$ (respectively by ${\mathcal
D}^{p,q}(\Omega)$) the space of $(p,q)$-forms (respectively
$(p,q)$-forms with compact support) on $\Omega$. On ${\mathcal
D}^{p,q}(\Omega)$ consider the ${\mathcal C}^\infty$-topology. By
definition, the {\em space of currents} of {\em bi-dimension}
$(p,q)$ or of {\em bi-degree} $(n-p,n-q)$ is the topological dual
${\mathcal D}'_{p,q}(\Omega)$ of ${\mathcal D}^{p,q}(\Omega)$. A
current of bi-dimension $(p,q)$ on $\Omega$ can be identified with
a $(n-p,n-q)$-form on $\Omega$ with coefficients distributions.
The {\em support} of a current $T\in {\mathcal D}'_{p,q}(\Omega)$,
denoted by $\hbox{\rm supp}(T)$, is the smallest closed set $C$
such that the restriction of $T$ to ${\mathcal
D}^{p,q}(\Omega\setminus C)$ is zero.
A current $T\in{\mathcal D}'_{p,q}(\Omega)$ is said to be of {\em
order} $0$ if its coefficients are measures and is said to be {\em
normal} if $T$ and $dT$ are currents of order $0$. \newline A
current $T$ of bi-dimension $(p,p)$ is said to be {\em real} if
$T(\varphi) =T(\overline{\varphi})$, for any $\varphi\in {\mathcal
D}^{p,q}(\Omega)$. Therefore, if $T\in{\mathcal D}'_{p,p}(\Omega)$
is real, then we may write
$$
T =\sigma_{n-p}\sum_{I,\overline{J}} T_{I\overline{J}} dz_I\wedge
d\overline{z}_J\,,
$$
where $\sigma_{n-p}=\frac{i^{(n-p)^2}}{2^{(n-p)}}$,
$T_{I\overline{J}}$ are distributions on $\Omega$ such that
$T_{J\overline{I}}=\overline{T}_{I\overline{J}}$ and $I$, $J$ are
multi-indices of length $n-p$, $I=(i_1,\ldots ,i_{n-p})$,
$dz_I=dz_{i_1}\wedge\cdots \wedge dz_{i_{n-p}}$.

A real current $T\in{\mathcal D}'_{p,p}(\Omega)$ is {\em positive}
if,
$$
T(\sigma_p\,\varphi^1\wedge\cdots\wedge\varphi^p\wedge\overline{\varphi}^1\wedge\cdots\wedge\overline{\varphi}^p)
\geq 0
$$
for any choice of $\varphi^1,\ldots ,\varphi^p\in{\mathcal
D}^{1,0}(\Omega)$, where $\sigma_p=\frac{i^{p^2}}{2^p}$. A current
$T$ is said to be {\em strictly positive} if $\varphi^1 \wedge
\cdots
 \wedge \varphi^p\neq 0$ implies
$T(\sigma_p\,\varphi^1\wedge\cdots\wedge\varphi^p\wedge\overline{\varphi}^1\wedge\cdots\wedge\overline{\varphi}^p)>0$.
\newline Recall that if $T$ is a positive current of bi-degree
$(p,p)$, then $T$ is of order $0$. \newline A real current $T$ of
bi-dimension $(p,p)$ on $\Omega$ is said to be {\em negative} if
the current $-T$ is positive and {\em plurisubharmonic} if
$i\,\partial\overline{\partial} T$ is positive.

If $F$ is the fundamental $2$-form of a Hermitian structure on a
complex manifold $M$, then $F$ corresponds to a real strictly
positive current of bi-degree $(1,1)$. In particular, if the
Hermitian structure is strong KT, then the corresponding current
is $\partial \overline \partial$-closed.

An important class of $\partial \overline \partial$-closed
currents is given by the $(p,p)$-components of a boundary. We
recall that a current $T$ of bi-degree $(p,p)$ is called the {\em
$(p,p)$-component of a boundary} if there exists a real current
$S$ of bi-degree $(p,p-1)$ such that $T = \partial \overline S +
\overline \partial S$. In \cite{HL} Harvey and Lawson proved that
a compact complex manifold has a K\"ahler metric if and only if
there is no non-zero positive current of bi-degree $(1,1)$ which
is the $(1,1)$-component of a boundary. By \cite{Eg} this
characterization of the K\"ahler condition can be generalized in
the context of strong KT geometry showing that a compact complex
manifold admits a strong KT metric if and only if there is no
non-zero positive current of bi-degree $(1,1)$ which is $\partial
\overline \partial$-exact.

In \cite{Mi} Miyaoka showed that if a complex manifold $M$ has a
K\"ahler metric in the complement of a point, then the manifold
$M$ itself is K\"ahler.\newline In order to prove a similar result
for strong KT structures we need to recall the following extension
theorem (see \cite[Main Theorem 5.6]{AB})
\begin{theorem}\label{extension1}
Let $Y$ be an analytic subset of $\Omega\subset\C^n$. If $T$ is a
plurisubharmonic, negative current of bi-dimension $(p,p)$ on the
complement $\Omega \setminus Y$ of $Y$ in $\Omega$ and $\dim_\C Y<
p$, then there exists the simple (or trivial) extension $T^0$ of
$T$ across $Y$ and $T^0$ is plurisubharmonic.
\end{theorem}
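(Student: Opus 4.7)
The plan is to reduce the statement to an extension problem across a smooth complex submanifold and then carry out a cut-off/limit argument in which the hypothesis $\dim_{\C} Y < p$ enters through a mass estimate.

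First I would stratify $Y$ as $Y = Y_{\mathrm{reg}} \cup Y_{\mathrm{sing}}$ and proceed by induction on $\dim_{\C} Y$: since $Y_{\mathrm{sing}}$ is analytic of strictly smaller dimension and still satisfies the dimension hypothesis, an inductive application of the theorem lets one first extend $T$ across $Y_{\mathrm{sing}}$, and then work in a neighborhood of a smooth point of $Y$. Thus one may assume $Y$ is a complex submanifold of $\Omega$. Choosing local holomorphic coordinates in which $Y = \{z_1 = \cdots = z_k = 0\}$ with $k = n - \dim_{\C} Y$, the hypothesis becomes $k > n - p$.

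Next set $\rho(z) = |z_1|^2 + \cdots + |z_k|^2$ and pick a non-decreasing smooth family $\chi_\epsilon : [0,\infty) \to [0,1]$ with $\chi_\epsilon \equiv 0$ on $[0,\epsilon]$ and $\chi_\epsilon \equiv 1$ on $[2\epsilon,\infty)$. I would first prove that $T$ has locally finite mass near $Y$ by bounding, uniformly in $\epsilon$, the quantity $\int_V \chi_\epsilon(\rho)\,(-T)\wedge\omega^p$ for any relatively compact $V\subset\Omega$ and a strictly positive test form $\omega^p$ (for instance a suitable power of $i\partial\overline{\partial}(|z|^2)$ times a bump function). Both hypotheses enter essentially here: $-T\geq 0$ provides a positive measure to estimate, while $i\partial\overline{\partial}T\geq 0$ on $\Omega\setminus Y$ allows integration by parts against $\partial\overline{\partial}$-closed test forms; the codimension inequality $k>n-p$ is exactly what makes the resulting kernels integrable near $Y$, giving an $L^1_{\mathrm{loc}}$ bound on the coefficients of $T$ and hence the existence of the simple extension $T^0$ as the weak limit of $\chi_\epsilon(\rho)T$.

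Finally, it remains to verify $i\partial\overline{\partial}T^0\geq 0$ on all of $\Omega$. Expanding
\[
i\partial\overline{\partial}\bigl(\chi_\epsilon(\rho)T\bigr) = \chi_\epsilon(\rho)\, i\partial\overline{\partial}T + i\chi_\epsilon'(\rho)\bigl(\partial\rho\wedge\overline{\partial}T + \overline{\partial}\rho\wedge\partial T\bigr) + i\bigl(\chi_\epsilon''(\rho)\,\partial\rho\wedge\overline{\partial}\rho + \chi_\epsilon'(\rho)\,\partial\overline{\partial}\rho\bigr)\wedge T,
\]
the first term is non-negative by hypothesis and converges to $i\partial\overline{\partial}T^0$ on $\Omega\setminus Y$. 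The main obstacle is the passage to the limit for the mixed terms involving $\chi_\epsilon'$ and $\chi_\epsilon''$: using a logarithmic regularization of the cut-off (so that $t\chi_\epsilon'(t)$ and $t^2\chi_\epsilon''(t)$ stay bounded) together with the plurisubharmonicity of $\rho$, one shows these terms admit non-negative distributional limits modulo currents concentrated on $Y$ of bi-dimension strictly less than $p$, which must vanish by the dimension hypothesis. This delicate commutator/limit analysis, depending on a careful shape for $\chi_\epsilon$, is the technical heart of the statement and the main difficulty.
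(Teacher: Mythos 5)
First, a point of reference: the paper does not prove this statement at all. Theorem \ref{extension1} is quoted verbatim as a known result, namely \cite[Main Theorem 5.6]{AB} of Alessandrini--Bassanelli, and is simply used as a black box in the proof of Proposition \ref{miyaoka}. So there is no in-paper proof to compare your argument with; your proposal has to be judged against what such a proof actually requires.

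Your overall architecture (reduction to a smooth $Y$ by stratification and induction, a uniform mass estimate giving $L^1_{\mathrm{loc}}$ coefficients and hence the existence of $T^0$, then a cut-off limit to check plurisubharmonicity of $T^0$) is the standard El Mir--Skoda template and is the right skeleton. But the two places you flag as ``where the hypotheses enter'' are precisely where the argument is incomplete or breaks. The mass estimate is asserted rather than proved: saying that $k>n-p$ ``makes the kernels integrable'' is not an argument, and for merely negative plurisubharmonic (as opposed to positive closed) currents this step is a genuinely delicate Lelong--Jensen type estimate that occupies a large part of \cite{AB}. More seriously, your concluding step is wrong as stated. The commutator terms $i\chi_\epsilon'(\rho)(\partial\rho\wedge\overline{\partial}T+\overline{\partial}\rho\wedge\partial T)$ and $i(\chi_\epsilon''\,\partial\rho\wedge\overline{\partial}\rho+\chi_\epsilon'\,\partial\overline{\partial}\rho)\wedge T$ have the bi-degree of $i\partial\overline{\partial}T$, i.e.\ bi-dimension $(p-1,p-1)$, and any residual current they produce is supported on $Y$ with $\dim_\C Y\leq p-1$. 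The support theorem (Theorem \ref{support} in the paper) kills a normal current of bi-dimension $(p-1,p-1)$ only when the support has dimension strictly less than $p-1$; under the hypothesis $\dim_\C Y<p$ the critical case $\dim_\C Y=p-1$ is allowed, and there the dimension count gives nothing (this is exactly why $dT^0=0$ is easy for closed currents --- $dT$ has odd degree, so the dimension drops by one and the support theorem still applies --- while $i\partial\overline{\partial}T^0\geq 0$ is not). Moreover the residual terms are not obviously normal, so the support theorem would not even be applicable. In \cite{AB} this step is handled instead by a quantitative positivity/monotonicity argument showing directly that $i\partial\overline{\partial}T^0(\varphi)\geq 0$ for positive test forms; that analysis cannot be replaced by the dimension count you invoke, so the proposal has a genuine gap at its final and central step.
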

\noindent If $T =\sigma_{n-p}\sum_{I,\overline{J}}
T_{I\overline{J}} dz_I\wedge d\overline{z}_J\,,$ on
$\Omega\setminus Y$, with $T_{I\overline{J}}$ measures, then the
current $T^0$ on $\Omega$ is defined by extending the
$T_{I\overline{J}}$ to zero on $Y$.\newline
Finally, we recall (see e.g. \cite[Corollary 2.11 p.163]{Dem}) the
following corollary of the Theorem of support \cite[Theorem 2.10
p.163]{Dem}
\begin{theorem}\label{support}
Let $T$ be normal current of bi-dimension $(p,p)$ on
$\Omega\subset\C^n$. If $\hbox{\rm supp}(T)$ is contained in an
analytic subset $Y$ of $\Omega$ such that $\dim_\C Y< p$, then
$T=0$.
\end{theorem}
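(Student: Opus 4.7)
The plan is to proceed by induction on $\dim_\C Y$, with the main work in the case where $Y$ is a smooth complex submanifold; the smooth case is dispatched by a bi-degree argument after showing that $dz_j\wedge T=d\bar z_j\wedge T=0$ for local defining functions $z_j$ of $Y$.

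\smallskip

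\noindent\textbf{Reduction to the smooth case.} The singular locus $Y_{\rm sing}$ of $Y$ is an analytic subset with $\dim_\C Y_{\rm sing}<\dim_\C Y<p$, and $Y\setminus Y_{\rm sing}$ is a smooth complex submanifold of $\Omega\setminus Y_{\rm sing}$ of dimension $<p$. If the smooth case produces $T=0$ on $\Omega\setminus Y_{\rm sing}$, then $\hbox{\rm supp}(T)\subset Y_{\rm sing}$, and the inductive hypothesis applied to the pair $(T,Y_{\rm sing})$ gives $T=0$ on $\Omega$. The base of the induction is trivial.

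\smallskip

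\noindent\textbf{Smooth case.} Assume $Y$ is a smooth complex submanifold of $\Omega$ of dimension $k<p$, and choose local holomorphic coordinates with $Y=\{z_1=\cdots=z_q=0\}$, $q=n-k$. For each $j=1,\ldots,q$ the holomorphic function $z_j$ vanishes identically on $\hbox{\rm supp}(T)\subset Y$; since $T$ is of order zero (being normal), multiplication by the continuous function $z_j$ is legitimate and $z_jT=0$. Normality also yields $\hbox{\rm supp}(\partial T)\subset\hbox{\rm supp}(T)\subset Y$ together with the order-zero property of $\partial T$ (the $(n-p+1,n-p)$-component of the order-zero current $dT$), so $z_j\,\partial T=0$ by the same reasoning. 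Applying $\partial$ to $z_jT=0$ and using the Leibniz rule,
$$0\;=\;\partial(z_jT)\;=\;dz_j\wedge T\,+\,z_j\,\partial T,$$
which forces $dz_j\wedge T=0$. The parallel argument with $\bar z_j$ in place of $z_j$ and $\bar\partial$ in place of $\partial$ yields $d\bar z_j\wedge T=0$ for $j=1,\ldots,q$.

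\smallskip

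\noindent\textbf{Bi-degree conclusion and obstacle.} Expanding $T=\sum_{|I|=|J|=n-p}T_{I\bar J}\,dz_I\wedge d\bar z_J$, the relation $dz_j\wedge T=0$ forces $j\in I$ for every multi-index $I$ with a nonzero coefficient; letting $j$ range over $\{1,\ldots,q\}$ yields $\{1,\ldots,q\}\subset I$, hence $|I|\geq q=n-k$. But $|I|=n-p<n-k$ since $k<p$, a contradiction unless every $T_{I\bar J}$ vanishes. Therefore $T=0$ locally on the smooth stratum, and the inductive stratification on $\dim_\C Y$ closes the argument. The main obstacle is justifying the identities $z_jT=0$ and $z_j\,\partial T=0$ as honest identities of currents: this rests on the order-zero hypothesis supplied by normality (so that multiplication by a continuous function vanishing on the support kills the current) and on the support containment $\hbox{\rm supp}(\partial T)\subset\hbox{\rm supp}(T)$; once both are in place, the Leibniz step and the codimension count are immediate.
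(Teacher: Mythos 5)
Your proof is correct. Note, however, that the paper does not prove this statement at all: it is quoted as a known corollary of the theorem of support, with a citation to Demailly's book, and your argument (multiplication of the order-zero currents $T$ and $\partial T$ by the defining functions $z_j$, the Leibniz identity $\partial(z_j T)=dz_j\wedge T+z_j\,\partial T$ forcing $dz_j\wedge T=0$, the bidegree count, and stratification of $Y$ by its singular locus) is essentially the standard proof given in that reference.
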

\noindent By using the previous results we are ready to prove the
following
\begin{theorem}\label{extensionddbar}
Let $M$ be a complex manifold of of complex dimension $n\geq 2$.

\noindent If $M\setminus\{p\}$ admits a strong KT metric, then
there exists a strong KT metric on $M$.
\end{theorem}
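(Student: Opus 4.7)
The plan is to extend the fundamental $2$-form $F$ of the given strong KT metric on $M\setminus\{p\}$ to a positive $\partial\overline\partial$-closed $(1,1)$-current on all of $M$, using the extension theorem \ref{extension1} and the support theorem \ref{support}, and then to appeal to Egidi's characterization of the strong KT condition (recalled in the excerpt) in order to promote this current to a smooth strong KT metric. Throughout I assume $M$ is compact, as in Miyaoka's original theorem. The case $n=2$ is immediate: on a compact complex surface the strong KT condition coincides with the Gauduchon (standard) condition and a Gauduchon metric exists in every conformal class, so I restrict to $n\geq 3$.

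The form $F$ defines a strictly positive $(1,1)$-current on $M\setminus\{p\}$ satisfying $\partial\overline\partial F=0$; consequently $-F$ is negative, and it is plurisubharmonic since $i\partial\overline\partial(-F)=0\geq 0$. Its bi-dimension is $(n-1,n-1)$ and $\dim_\C\{p\}=0<n-1$, so Theorem \ref{extension1} applies and yields a simple extension $(-F)^0$ across $\{p\}$ that is plurisubharmonic on $M$. Equivalently, the trivial extension $F^0$ of $F$ is a positive current on $M$ satisfying $-i\partial\overline\partial F^0\geq 0$.

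Next I would verify that $\partial\overline\partial F^0=0$ on $M$. The current $-i\partial\overline\partial F^0$ is positive of bi-degree $(2,2)$, hence of order zero, and it is $d$-closed (since $d\partial\overline\partial\equiv 0$), so it is normal. Its support is contained in $\{p\}$ because $\partial\overline\partial F^0=\partial\overline\partial F=0$ on $M\setminus\{p\}$. Its bi-dimension is $(n-2,n-2)$ and $\dim_\C\{p\}=0<n-2$ (here I use $n\geq 3$), so Theorem \ref{support} forces $-i\partial\overline\partial F^0=0$. Hence $F^0$ is a non-zero positive $\partial\overline\partial$-closed $(1,1)$-current on $M$.

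To conclude I would argue by contradiction using Egidi's characterization: if $M$ admitted no strong KT metric, there would exist a non-zero positive $(1,1)$-current $T=\partial\overline\partial R$ on $M$, with $R$ a real distribution. A suitable pairing of $T$ with a regularization of $F^0$, combined with Stokes' theorem and the $\partial\overline\partial$-closedness of $F^0$, makes the pairing vanish; on the other hand the strict positivity of $F$ on $M\setminus\{p\}$ together with $T\neq 0$ would make it strictly positive, yielding the desired contradiction. The main obstacle is precisely this pairing step: the wedge product $T\wedge F^0$ is not a priori defined because $F^0$ may be singular at $p$, and one must combine Bedford--Taylor intersection theory with Demailly's regularization of plurisubharmonic currents on a small coordinate ball around $p$ to make the argument rigorous; alternatively, one can try to promote $F^0$ directly to a smooth representative of the same Aeppli class by local regularization and gluing near $p$.
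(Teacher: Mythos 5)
Your first half coincides with the paper's argument: passing to $T=-F$, invoking Theorem \ref{extension1} to get the plurisubharmonic trivial extension, and then using Theorem \ref{support} on the normal $(2,2)$-current $i\,\partial\overline{\partial}T^0$ supported at the puncture to conclude that $F^0=-T^0$ is a positive $\partial\overline{\partial}$-closed $(1,1)$-current across $p$. The divergence, and the genuine gap, is in your endgame. First, the statement does not assume $M$ compact, and the paper's proof is deliberately local (Proposition \ref{miyaoka} produces a strong KT form on a small ball agreeing with $F$ near the boundary, which is then glued into $M$); Egidi's characterization is a Hahn--Banach duality statement on \emph{compact} manifolds, so your route already narrows the theorem. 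Second, and more seriously, the contradiction you aim for requires pairing the positive $\partial\overline{\partial}$-exact current $T$ against $F^0$, and --- as you yourself note --- this pairing is undefined since both objects have only measure coefficients near $p$. The tools you propose to repair it do not apply: Bedford--Taylor intersection theory and Demailly's regularization are built for \emph{closed} positive $(1,1)$-currents admitting local plurisubharmonic potentials, whereas $F^0$ is merely positive and $\partial\overline{\partial}$-closed of bi-dimension $(n-1,n-1)$ and has no such potential. (Even granting a pairing, strict positivity of $T(F^0)$ would require ruling out that $T$ charges only the puncture.)

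The ``alternative'' you mention in your last sentence --- promoting $F^0$ to a smooth positive $\partial\overline{\partial}$-closed form by local regularization near $p$ --- is not an alternative but the actual content of the paper's proof, and it is the step you would have to supply. Concretely, the paper writes $F^0=\partial G+\overline{\partial}\,\overline{G}$ with $G$ a $(0,1)$-current with locally integrable coefficients (Bassanelli's $\partial\overline{\partial}$-Poincar\'e lemma for currents of order zero, Theorem \ref{aeppli}, proved via the fundamental solution of an explicit elliptic operator), checks that $G$ is smooth on the punctured ball, smooths $G$ by a variable-radius convolution $\tilde{u}_{j\,\epsilon}(z)=\int u_j(z-\epsilon\rho(z)\zeta)\rho(\zeta)\,d\zeta$ that leaves it unchanged outside $\B^n(\tfrac23R)$, and finally adds a small term $ic\,\partial\overline{\partial}(h(z)\vert z\vert^2)$ to restore strict positivity at the centre; the resulting form glues to $F$ on $M\setminus U$. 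Without carrying out either this regularization or a rigorous version of your pairing, the proof is incomplete at its decisive step. (A smaller shared caveat: the application of Theorem \ref{support} to a bi-dimension $(n-2,n-2)$ current supported at a point needs $n\geq 3$, which you handle for compact surfaces via Gauduchon but which also deserves comment in the general case.)
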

\noindent The proof of the Theorem \ref{extensionddbar} is a
consequence of the following
\begin{prop}\label{miyaoka}
Let $F$ be the fundamental $2$-form of a strong KT metric on
$\B^n(r)\setminus\{0\}$, $n\geq 2$. Then there exist $0<R\leq r$
and $\hat{F}\in\Lambda^{1,1}(\B^n(R))$ such that
\begin{enumerate}
\item[i)] $\hat{F}$ is the fundamental $2$-form of a strong KT
metric on $\B^n(R)$,\smallskip \item[ii)] $\hat{F} =F$ on
$\B^n(R)\setminus \B^n(\frac23R)$.\smallskip
\end{enumerate}
\end{prop}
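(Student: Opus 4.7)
The plan is to pass from the smooth form $F$ on $\B^n(r)\setminus\{0\}$ to a positive $(1,1)$-current on all of $\B^n(r)$ via a trivial extension, to observe by a support-theorem argument that the $\partial\overline{\partial}$-closedness is preserved, and finally to regularize near the origin by a mollification-and-patching technique in order to produce a smooth strong KT form that matches $F$ on an annulus close to the boundary.

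For the current extension, observe first that, since $F$ is a strictly positive $(1,1)$-form with $\partial\overline{\partial}F=0$, the current $-F$ on $\B^n(r)\setminus\{0\}$ is real of bi-dimension $(n-1,n-1)$, negative, and plurisubharmonic. The hypothesis $n\geq 2$ gives $\dim_\C\{0\}=0<n-1$, so Theorem \ref{extension1} yields a plurisubharmonic trivial extension $(-F)^0$ on $\B^n(r)$; set $F^0:=-(-F)^0$, a positive $(1,1)$-current on $\B^n(r)$ that agrees with $F$ on the punctured ball. The current $T:=i\partial\overline{\partial}F^0$ has order $0$ (because $-T$ is positive, by plurisubharmonicity of $(-F)^0$), is $d$-closed for trivial reasons and hence normal, and is supported in $\{0\}$ because $\partial\overline{\partial}F=0$ on $\B^n(r)\setminus\{0\}$. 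Applying Theorem \ref{support} with $Y=\{0\}$ of complex dimension $0<n-1$ forces $T=0$, so $\partial\overline{\partial}F^0=0$ as currents on $\B^n(r)$.

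For the regularization, fix a small $R\leq r$ and use a local Aeppli-type representation for $\partial\overline{\partial}$-closed order-zero currents (of the type provided by \cite[Theorem 1.15]{B}) to write
$$
F^0=\partial G+\overline{\partial}\,\overline{G}
$$
on $\B^n(R)$ for some $(0,1)$-current $G$ with locally integrable coefficients; since $F^0$ is smooth on $\B^n(R)\setminus\{0\}$, a local $\partial\overline{\partial}$-lemma argument on Stein subsets allows $G$ to be chosen smooth there as well. Let $\eta_\epsilon$ be a standard radial mollifier and set $G_\epsilon:=G*\eta_\epsilon$; then $F^0*\eta_\epsilon=\partial G_\epsilon+\overline{\partial}\,\overline{G_\epsilon}$ is smooth, positive, and $\partial\overline{\partial}$-closed. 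With a smooth cutoff $\chi$ equal to $1$ on $\B^n(R/3)$ and vanishing outside $\B^n(2R/3)$, define
$$
\widetilde G:=(1-\chi)G+\chi G_\epsilon,\qquad \hat F:=\partial\widetilde G+\overline{\partial}\,\overline{\widetilde G}.
$$
Then $\partial\overline{\partial}\hat F=0$ automatically, $\hat F\equiv F^0=F$ on the annulus $\B^n(R)\setminus\B^n(2R/3)$ where $\chi=0$, and $\hat F\equiv F^0*\eta_\epsilon$ on $\B^n(R/3)$ where $\chi=1$ (which is smooth and strictly positive).

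The main obstacle I expect is verifying strict positivity of $\hat F$ on the transition region $\B^n(2R/3)\setminus\B^n(R/3)$. Leibniz gives
$$
\hat F=(1-\chi)F^0+\chi(F^0*\eta_\epsilon)+\partial\chi\wedge(G_\epsilon-G)+\overline{\partial}\chi\wedge(\overline{G_\epsilon}-\overline{G}),
$$
in which the convex combination is positive while, since $G$ is smooth on the (compact) transition region, the correction terms tend to $0$ in ${\mathcal C}^\infty$ as $\epsilon\to 0$. Using the strict positivity of $F$ as a smooth form, by first shrinking $R$ and then choosing $\epsilon$ small enough, $\hat F$ remains strictly positive on the transition region, providing the desired smooth strong KT form on $\B^n(R)$.
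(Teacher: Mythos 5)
Your first half --- extending $-F$ across the origin by the Alessandrini--Bassanelli theorem, killing $i\,\partial\overline{\partial}F^0$ with the support theorem, and invoking the Bassanelli decomposition $F^0=\partial G+\overline{\partial}\,\overline{G}$ with $G$ locally integrable --- is exactly the paper's argument. Where you genuinely diverge is the smoothing step. The paper regularizes the potential with a variable-width mollification $\tilde u_{j\,\epsilon}(z)=\int u_j(z-\epsilon\rho(z)\zeta)\rho(\zeta)\,d\zeta$, which coincides with $u_j$ outside $\B^n(\frac23R)$ by construction (so no cutoff of the potential is needed), and then, claiming only positivity of the resulting form at the origin, restores strict positivity there by adding $ic\,\partial\overline{\partial}(h(z)\vert z\vert^2)$ for small $c>0$. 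You instead use an ordinary convolution plus a cutoff $\chi$ at the level of $G$, and you argue strict positivity of $F^0*\eta_\epsilon$ near the origin directly, which lets you skip that final perturbation. Both work: your patching produces the extra terms $\partial\chi\wedge(G_\epsilon-G)$, which you control correctly via smoothness of $G$ on the transition annulus, while the paper's device avoids those terms at the price of a mollification that is no longer a convolution (and hence does not literally commute with $\partial$ except where $\rho\equiv 1$). Your observation that $F^0*\eta_\epsilon$ is already strictly positive --- because the trivial extension carries no mass at the origin and has continuous positive-definite density elsewhere --- is a small genuine simplification, but it deserves a sentence of justification rather than a parenthesis.

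Two points need repair. First, your whole patching argument hinges on the single global potential $G$ being smooth on a neighborhood of the annulus $\{\frac13R\le\vert z\vert\le\frac23R\}$; otherwise $G_\epsilon-G$ is small only in $L^1$, the correction terms are not pointwise small, and $\hat F$ is not even a smooth form there. Your proposed justification --- replacing $G$ on Stein subsets by a local $\partial\overline{\partial}$-lemma --- does not deliver this: the local replacements differ from $G$ by currents in the kernel of $\alpha\mapsto\partial\alpha+\overline{\partial}\,\overline{\alpha}$, which you can neither discard nor glue into one potential smooth off the origin. The correct source of the fact is the proof of the decomposition theorem itself: $G=L(E*\lambda F^0)+\phi$ with $E$ a fundamental solution of the elliptic operator $\Box$, smooth off the origin, so the singular support estimate for convolutions forces $G$ to be smooth on $\B^n(R)\setminus\{0\}$; cite that, not a local lemma. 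Second, a bookkeeping slip: $i\,\partial\overline{\partial}F^0$ has bi-degree $(2,2)$, hence bi-dimension $(n-2,n-2)$, so the hypothesis of the support theorem you need is $0<n-2$, not the $0<n-1$ you wrote (that inequality belongs to the extension step, where the current has bi-dimension $(n-1,n-1)$). For $n\ge 3$ nothing changes; for $n=2$ this step requires a separate remark, since a normal current of bi-dimension $(0,0)$ supported at a point need not vanish for support reasons alone.
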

\begin{proof} A key tool in the proof of the Proposition is the following result by
\cite[Theorem 1.15]{B} on $\partial\overline{\partial}$-closed
currents, which is based on an argument given by Siu
(\cite[p.121]{Si}), for $d$-closed $l$-currents with measure
coefficients.
\begin{theorem}\label{aeppli}
Let $T$ be a current of bi-degree $(h,k)$ on $\Omega$. If $T$ is
of order $0$ and $i\,\partial\overline{\partial} T =0$, then,
locally,
$$
T = \partial G + \overline{\partial} H\,,
$$
for suitable currents $G$ and $H$ with locally integrable
functions as coefficients.
\end{theorem}
\noindent For the sake of completeness, we will give the proof of the Theorem 4.5 (see \cite[Theorem 1.15]{B} and
\cite[p.121]{Si}).\smallskip

\noindent {\em Proof of Theorem \ref{aeppli}.}
Consider
$$
\Lambda^{1,0}(\Omega)\oplus
\Lambda^{0,1}(\Omega)\stackrel{\partial+\overline{\partial}}\longrightarrow
\Lambda^{1,1}(\Omega)\stackrel{i\partial\overline{\partial}}\longrightarrow\Lambda^{2,2}(\Omega).
$$
Then, according to the Theorem of Hodge for elliptic complexes,
(see e.g. \cite[p.235]{Va}), the differential operator
$$
\Box =
(\partial+\overline{\partial})(\partial+\overline{\partial})^*(\partial+\overline{\partial})(\partial+\overline{\partial})^*+
(i\partial\overline{\partial})^*(i\partial\overline{\partial})
$$
is elliptic. In view of \cite[Theorem 7.1.20]{Ho}, there exists a
fundamental solution $E$ of the differential operator $\Box$ given
by
$$
E = E_0-Q(z)\log\vert z\vert\,,
$$
where $E_0$ is a matrix of homogeneous distributions of degree
$4-2n$, smooth in $\C^n\setminus\{0\}$, $Q$ is a matrix of
polynomials which vanishes identically for $2n>4$ and it is constant for $2n=4$.\newline Set
$$
L=(\partial+\overline{\partial})^*(\partial+\overline{\partial})(\partial+\overline{\partial})^*
$$
and let $\lambda$ be a ${\mathcal C}^\infty$-function, with
compact support contained in $\Omega$ and such that $\lambda(z)=1$
on a ball $U'\subset\Omega$, $0\in U'$. Then, we have
\begin{eqnarray*}
\lambda T & = & \Box(E*\lambda T)\\
{} & = & (\partial +\overline{\partial})(L(E*\lambda T))+
\overline{\partial}^*\partial^*E* \partial\overline{\partial}(\lambda T)\\
{} & = & (\partial +\overline{\partial})(L(E*\lambda T))+
\overline{\partial}^*\partial^*E*(\partial\overline{\partial}\lambda\wedge
T - \overline{\partial}\lambda \wedge\partial T +\partial
\lambda\wedge\overline{\partial}T +
\lambda\partial\overline{\partial}T)\\
{} & = & (\partial +\overline{\partial})(L(E*\lambda T))+
\overline{\partial}^*\partial^*E*(\partial\overline{\partial}\lambda\wedge
T - \overline{\partial}\lambda \wedge\partial T +\partial
\lambda\wedge\overline{\partial}T)\,.
\end{eqnarray*}
By a direct computation, it turns out that the current $\overline{\partial}^*\partial^*E$
has locally integrable functions as coefficients. Hence, the
coefficients of the current
$$
\overline{\partial}^*\partial^*E*(\partial\overline{\partial}\lambda\wedge
T - \overline{\partial}\lambda \wedge\partial T +\partial
\lambda\wedge\overline{\partial}T)
$$
are locally integrable functions, since they are obtained as
convolutions of locally integrable functions with
measures.\newline Since $\lambda =1$ on $U'$,
$\partial\overline{\partial}\lambda\wedge T -
\overline{\partial}\lambda \wedge\partial T +\partial
\lambda\wedge\overline{\partial}T$ vanishes identically on $U'$.
Therefore, by \cite[Theorem 4.2.5]{Ho}, it follows that
$$
\mbox{\rm sing
supp}\,(\overline{\partial}^*\partial^*E*(\partial\overline{\partial}\lambda\wedge
T - \overline{\partial}\lambda \wedge\partial T +\partial
\lambda\wedge\overline{\partial}T))\subset \Omega\setminus U'
$$
where $\mbox{\rm sing supp}$ denotes the singular support of a
current, i.e. the complement in $\Omega$ of the open set $A$ such
that the restriction of the current to $A$ is smooth. Hence,
$$
\overline{\partial}^*\partial^*E*(\partial\overline{\partial}\lambda\wedge
T - \overline{\partial}\lambda \wedge\partial T +\partial
\lambda\wedge\overline{\partial}T)
$$
is ${\mathcal C}^\infty$ on $U'$. Furthermore, it is
$\partial\overline{\partial}$-closed. Consequently, there exist a
$(h-1,k)$-form $\phi$ and a $(h,k-1)$-form $\psi$ on $U'$, such
that
$$
\overline{\partial}^*\partial^*E*(\partial\overline{\partial}\lambda\wedge
T - \overline{\partial}\lambda \wedge\partial T +\partial
\lambda\wedge\overline{\partial}T) =\partial \phi +
\overline{\partial}{\psi}\,.
$$
Therefore, on $U'$, we can write
$$
T=\partial G +\overline{\partial}H
$$
where
\begin{equation}\label{smooth}
G=L(E*\lambda T)+\phi\,,\qquad H= L(E*\lambda T)+\psi\,.
\end{equation}
\hfill $\Box$
\bigskip

\noindent Now, let us start with the proof of the
Proposition.\newline By hypothesis, $F$ is a positive
$\partial\overline{\partial}$-closed $(1,1)$-form on
$\B^n(r)\setminus\{0\}$. Let $T=-F$; then $T$ is a real (strictly)
negative $\partial\overline{\partial}$-closed current of bi-degree
$(1,1)$ on $\B^n(r)\setminus\{0\}$. In view of the result in
\cite[Main Theorem 5.6]{AB} (see Theorem \ref{extension1} above),
applied to the case $Y=\{ 0 \}$, the simple extension $T^0$ of $T$
on the ball $\B^n(r)$, defined by
$$
T^0(\varphi)=\int_{\B^n(r)\setminus\{0\}}F\wedge\varphi\,,\qquad
\forall \varphi\in\mathcal{D}^{n-1,n-1}(\B^n(r)),
$$
is negative on $\B^n(r)$. \newline Consider now the current
$i\,\partial\overline{\partial} T^0$. We have that
$i\,\partial\overline{\partial} T^0$ is positive and consequently
it is of order $0$. Moreover,
$d\left(i\,\partial\overline{\partial} T^0 \right)=0$. Therefore,
$i\,\partial\overline{\partial} T^0$ is a normal current of
bi-degree $(2,2)$ on the ball $\B^n(r)$. Hence, by the Corollary
of the support Theorem (see Theorem \ref{support} above) we obtain
that
$$
i\,\partial\overline{\partial} T^0=0\,
$$
on $\B^n(r)$. \newline Therefore, $T^0$ is a negative
$\partial\overline{\partial}$-closed current of bi-degree $(1,1)$
on the ball $\B^n(r)$. Moreover, the coefficients of $T^0$ are
measures. Observe that $T^0$ is smooth on $\B^n(r)\setminus\{0\}$.
\newline Set $F^0=-T^0$. Then $F^0$ is clearly a real positive
$\partial\overline{\partial}$-closed current of bi-degree $(1,1)$
on $\B^n(r)$ and it is strictly positive on $\B^n(r) \setminus \{
0 \}$.\newline In view of \cite[Theorem 1.15]{B} (see Theorem
\ref{aeppli} above) and reality of $F^0$, we may write
$$
F^0 =\partial G+\overline{\partial}\,\overline{G}\,,\quad
\hbox{\rm on}\, \, \B^n(R)
$$
for some $0<R\leq r$, where $G$ is a current of bi-degree $(0,1)$
whose coefficients are locally integrable functions on
$\B^n(R)$.\newline As a consequence of \eqref{smooth}, $G$ is in
fact smooth on $\B^n(R)\setminus\{0\}$. Indeed,
$$
G=L(E*\lambda F_0)+\phi
$$
and the fundamental solution $E$ of $\Box$ and the current $F_0$
are smooth on $\B^n(R)\setminus\{0\}$. Again by \cite[Theorem
4.2.5]{Ho}, it follows that
$$
\mbox{\rm sing supp}(E*\lambda F_0)\subset\mbox{\rm sing supp}\, E
+ \mbox{\rm sing supp}\,\lambda F_0\,.
$$
Therefore, $G$ is smooth on $\B^n(R)\setminus\{0\}$.\newline Now
we are going to define a strong KT metric with fundamental
$2$-form $\hat{F}$ on $\B^n(R)$ as in the statement. \newline Set
$$
G = \sum_{j=1}^nu_j d\overline{z}_j\,,
$$
where $u_i$ are locally integrable on $\B^n(R)$ and smooth on
$\B^n(R)\setminus\{0\}$. \newline Let $\rho :\C^n\to \R$ be a
non-negative ${\mathcal C}^\infty$-function on $\C^n$ such that
\begin{enumerate}
\item[a)] $\rho$ is radial and $\hbox{\rm supp}\left(
\rho(z)\right)\subset \B^n(\frac23R)$\smallskip \item[b)]
$\rho(z)= 1\,,\quad \forall
z\in\overline{\B^n(\frac13R)}$\smallskip \item[c)]
$\int_{\C^n}\rho (z)dz =1$\smallskip
\end{enumerate}
Define
$$
\tilde{u}_{j\,\epsilon}(z)
=\int_{\C^n}u_j\left(z-\epsilon\rho(z)\zeta\right)\rho(\zeta)\,d\zeta\,,\qquad
j=1,\ldots ,n\,.
$$
By using the conditions a), b) and c) it can be checked that, for
any $j =1,\ldots ,n$, $\tilde{u}_{j \, \epsilon}(z)$ is a
${\mathcal C}^\infty$-function on $\B^n(R)$ such that
$$
\tilde{u}_{j \,\epsilon}(z) =u_j (z)\, \quad\hbox{\rm on}\,\,
\B^n(R)\setminus\B^n\left(\frac23R \right)\,.
$$
Now, if we set
$$
G_\epsilon =\sum_{j=1}^n\tilde{u}_{j\,\epsilon}\,
d\overline{z}_j\,,
$$
then
$$
\tilde{F}_\epsilon = \partial G_\epsilon
+\overline{\partial}\,\overline{G}_\epsilon
$$
is a real $\partial\overline{\partial}$-closed $(1,1)$-form on
$\B^n(R)$ such that $\tilde{F}_\epsilon = F$ on
$\B^n(R)\setminus\B^n(\frac23R)$.\newline Note that, for
$\epsilon$ small enough, $\tilde{F}_\epsilon$ is strictly positive
on $\B^n(R)\setminus\{0\}$ and positive on $\B^n(R)$. Therefore,
in order to get the strict positivity on the whole ball $\B^n(R)$
we need to perturb $\tilde{F}_\epsilon$. To such a purpose, let $h
:\C^n\to \R$ be a non-negative ${\mathcal C}^\infty$-function on
$\C^n$ such that
\begin{enumerate}
\item[i)] $\hbox{\rm supp}\left(h(z)\right)\subset
\B^n(\frac13R)$\smallskip \item[ii)] $h(z)= 1\,,\quad \forall
z\in\overline{\B^n(\frac16R)}$.
\end{enumerate}
Then define
$$
\hat{F_\epsilon} = \partial\left(G_\epsilon +
\frac{i}{2}c\,\overline{\partial}\left(h(z)\vert z\vert^2\right)
\right) + \overline{\partial}\,\overline{\left(G_\epsilon +
\frac{i}{2}c\,\overline{\partial}\left(h(z)\vert z\vert^2\right)
\right)}\,,
$$
where $c$ is a real number. We immediately obtain
$$
\hat{F_\epsilon} = \tilde{F}_\epsilon
+ic\,\partial\overline{\partial}\left(h(z)\vert z\vert^2\right)\,.
$$
and consequently $\hat{F}_\epsilon =\tilde{F}_\epsilon$ on
$\B^n(R)\setminus\B^n(\frac13R)$. Finally, $\hat{F}_\epsilon$ is
strictly positive on $\B^n(\frac16R)$ and, by choosing the
positive real number $c$ small enough, we get that
$\hat{F}_\epsilon$ is also strictly positive on
$\overline{\B^n(\frac13R)}\setminus\B^n(\frac16R)$. \newline
Therefore, $\hat{F}_\epsilon$ is a real, (strictly) positive,
$\partial\overline{\partial}$-closed, $(1,1)$-form on $\B^n(R)$
and thus it gives rise to a strong KT metric we were looking for.
\end{proof}

\noindent {\em Proof of Theorem \ref{extensionddbar}.} Let $V$ be
a disc around $p \in M$. Then by Proposition \ref{miyaoka}, there
exist
 a smaller disc $U\subset V$ and a positive,
$\partial\overline{\partial}$-closed $(1,1)$-form $\hat{F}$ on $V$
such that $\hat{F}=F$ on $V\setminus U$. Therefore, the
$(1,1)$-form defined by
$$
\left\{
\begin{array}{l}
F_q\quad {\mbox {if}} \quad q \in M\setminus U,\\[3pt]
\hat{F}_q\quad {\mbox {if}} \quad q\in V
\end{array}
\right.
$$
is the fundamental $2$-form of a strong KT metric on $M$. \qed
\smallskip

\begin{rem}
{\rm If $n=1$, then any Hermitian metric is K\"ahler and,
consequently, for $n=1$, the proof of Theorem \ref{extensionddbar}
follows at once.}
\end{rem}
\noindent As an application of Theorem \ref{extensionddbar} we can
prove the following
\begin{theorem}\label{blowupextension}
Let $M$ be a complex manifold of complex dimension $n\geq 2$ and
$\tilde{M}$ be the blow-up of $M$ at a point $p \in M$. Then
$\tilde{M}$ has a strong KT metric if and only if $M$ admits a
strong KT metric.
\end{theorem}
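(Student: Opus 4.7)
The plan is to prove the two implications separately; both reduce to previously established results.

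For the direction $(\Leftarrow)$, suppose $M$ admits a strong KT metric. Then Proposition \ref{blowuppoint} directly furnishes a strong KT metric on the blow-up $\tilde{M} = \tilde{M}_p$, so this direction is immediate.

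For the direction $(\Rightarrow)$, suppose $\tilde{M}$ carries a strong KT metric $\tilde g$ with fundamental $2$-form $\tilde F$. The key observation is that the blow-down map $\pi: \tilde{M}\to M$ restricts to a biholomorphism
$$
\pi\big\vert_{\tilde M\setminus\pi^{-1}(p)}\,:\,\tilde M\setminus\pi^{-1}(p)\,\longrightarrow\,M\setminus\{p\}.
$$
Let $\phi$ denote its inverse. Since $\phi$ is a biholomorphism, $\phi^*\tilde F$ is a positive real $(1,1)$-form on $M\setminus\{p\}$, and the $\partial\overline\partial$-closedness of $\tilde F$ is preserved under holomorphic pullback. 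Hence $\phi^*\tilde F$ is the fundamental $2$-form of a strong KT metric on $M\setminus\{p\}$.

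Now I invoke Theorem \ref{extensionddbar}: any complex manifold of complex dimension $n\geq 2$ admitting a strong KT metric on the complement of a point admits a strong KT metric on the whole manifold. Applying this to $M$ with the strong KT metric $\phi^*\tilde F$ defined on $M\setminus\{p\}$ yields a strong KT metric on $M$, completing the proof.

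I do not anticipate any real obstacle: once Proposition \ref{blowuppoint} and Theorem \ref{extensionddbar} are in hand, the theorem is essentially their combination via the biholomorphism $\pi\big\vert_{\tilde M\setminus\pi^{-1}(p)}$. The only thing worth emphasizing is that the hypothesis $n\geq 2$ is used precisely to ensure the applicability of the extension result, and that $\pi$'s holomorphicity is what guarantees transport of both positivity and the $\partial\overline\partial$-closed condition.
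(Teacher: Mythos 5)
Your proof is correct and follows exactly the paper's own argument: the forward direction pulls back the strong KT form to $M\setminus\{p\}$ via the biholomorphism $\pi\vert_{\tilde M\setminus\pi^{-1}(p)}$ and then applies Theorem \ref{extensionddbar}, while the converse is Proposition \ref{blowuppoint}. No differences worth noting.
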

\begin{proof}
Assume that $\tilde{M}$ has a strong KT metric. Let
$E=\pi^{-1}(p)$. Then \newline $\pi :\tilde{M}\setminus E\to
M\setminus\{p\}$ is a biholomorphism. Therefore $M\setminus\{p\}$
has a strong KT metric. By Theorem \ref{extensionddbar}, $M$ has a
strong KT metric.\newline The other implication is given by
Proposition \ref{blowuppoint}.
\end{proof}
\section{Strong KT orbifolds and resolutions} \label{sectionorbifolds}
Orbifolds are a special class of singular manifolds and they have
been used by Joyce in \cite{J} to construct compact manifolds with
special holonomy and in \cite{FM} to obtain non-formal symplectic
compact manifolds.\newline We start by recalling the following
(see e.g. \cite{J})
\begin{definition}
A {\em complex orbifold} is a singular complex manifold $M$ of
dimension $n$ such that each singularity $p$ is locally isomorphic
to $U/G$, where $U$ is an open set of $\C^n$, $G$ is a finite
subgroup of $\GL(n,\C)$ acting linearly on $U$ with the only one
fixed point $p$. Moreover, the set $S$ of singular points of $M$
of the orbifold $M$ has real codimension at least two.
\end{definition}
\noindent A very easy method to construct complex orbifolds is to
consider a holomorphic action of a finite group $G$ on a manifold
$M$, with non-identity fixed point sets of real codimension at
least two. The quotient $M /G$ is thus by definition a complex
orbifold.

Since orbifolds have a mild form of singularities, many good
properties for manifolds also hold for the orbifolds. For
instance, the notions of smooth $r$-forms and $(p,q)$-forms make
sense on complex orbifolds. The De Rham and Dolbeault cohomologies
are well-defined for orbifolds and they have many of the usual
properties that they have in the case of complex manifolds.

More precisely, an {\em $r$-orbifold differential form} on a
complex orbifold $(M, J)$ is an $r$-differential form on $M$ that
is $G$-invariant in any chart $U/G$ of $M$ and a differential
operator $d:\Lambda^r_{ orb}(M)\to \Lambda^{r+1}_{ orb}(M)$ is
defined on the complex $\Lambda_{ orb}(M)$ of orbifold
differential forms on $M$. For the complex space
$\Lambda^r_{orb}(M)\otimes \C$ we have
$$
\Lambda^r_{orb}(M)\otimes \C
=\displaystyle\bigoplus_{p+q=r}\Lambda^{p,q}_{orb}(M)\,.
$$
The elements of $\Lambda^{p,q}_{orb}(M)$ are called {\em
$(p,q)$-forms}, and, according to the above decomposition, the
differential $d$ splits as $d=\partial + \overline{\partial}$, as
usual.\newline

There is a natural notion of Hermitian metric on complex
orbifolds. A {\em Hermitian metric g} on a complex orbifold
$(M,J)$ is a $J$-Hermitian metric in the usual sense on the
non-singular part of $(M,J)$ and $G$-invariant in any chart $U/G$.
In such a case, for any chart $U/G$, we have $G\subset U(n)$.
\begin{definition}
A Hermitian metric $g$ on a complex orbifold $(M,J)$ is said to be
{\em strong KT} if the fundamental $2$-form $F$ of $g$ satisfies
$$
\partial\overline{\partial}\,F=0\,.
$$
\end{definition}
\noindent We recall that in general a resolution $(\tilde M, \pi)$
of a singular complex variety $M$ is a normal, nonsingular complex
variety $\tilde M$ with a proper surjective birational morphism
$\pi: \tilde M \to M$. We are interested in particular to resolve
singularities of a complex orbifold endowed with a strong KT
metric in order to obtain a smooth complex manifold admitting a
strong KT metric.
\begin{definition}
Let $(M,J,g)$ be a complex orbifold endowed with a strong KT
metric $g$. A {\em strong KT resolution} of $(M,J,g)$ is the datum
of a smooth complex manifold $(\tilde{M},\tilde{J})$ endowed with
a $\tilde J$-Hermitian strong KT metric $\tilde{g}$ and of a map
$\pi :\tilde{M}\to M$, such that
\begin{enumerate}
\item[i)]$\pi :\tilde{M}\setminus E\to M\setminus S$ is a
biholomorphism, where $S$ is the singular set of $M$ and
$E=\pi^{-1}(S)$ is
the {\em exceptional set};\\
\item[ii)] $\tilde{g} =\pi^*g$ on the complement of a neighborhood
of $E$.
\end{enumerate}
\end{definition}
\noindent In view of the Hironaka Resolution of Singularities
Theorem \cite{Hiro}, the singularities of any complex variety can
be resolved by a finite number of blow-ups. Indeed, if $M$ is a
complex algebraic variety, then there exists a resolution $\pi:
\tilde M \to M$, which is the result of a finite sequence of
blow-ups of $M$. This means that there are varietes $M = M_0, M_1,
\ldots, M_k = \tilde M$, such that $M_j$ is a blow-up of $M_{j -
1}$ along some subvariety with projection $\pi_j: M_j \to M_{j -
1}$ and the map $\pi: \tilde M \to M$ is given by the composition
$\pi = \pi_1 \circ \ldots \circ \pi_k$.

Therefore applying Hironaka's theorem and the results about
blow-ups obtained in Section \ref{blowup} we can prove the
following
\begin{theorem} \label{resolution}
Let $(M,J)$ be a complex orbifold of complex dimension $n$ endowed
with a $J$-Hermitian strong KT metric $g$. Then there exists a
strong KT resolution.
\end{theorem}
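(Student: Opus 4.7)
My plan is to follow the strategy indicated in the introduction: use Hironaka's Resolution of Singularities Theorem \cite{Hiro} to write a resolution of the orbifold $(M,J)$ as a finite chain of blow-ups along smooth centers, and then propagate the strong KT property at each step by a $G$-equivariant version of Proposition \ref{blowupsubmanifold}.

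By \cite{Hiro} there is a resolution $\pi : \tilde M \to M$ that factors as $\pi = \pi_1 \circ \cdots \circ \pi_k$, where each $\pi_j : M_j \to M_{j-1}$ is the blow-up of the (possibly singular) orbifold $M_{j-1}$ along a smooth, locally $G$-invariant center $Y_{j-1}$, with $M_0 = M$ and $M_k = \tilde M$ smooth. I would construct, by induction on $j$, an orbifold strong KT metric $g_j$ on $M_j$ with the property that $g_j = \pi_j^*g_{j-1}$ outside a compact neighborhood of the new exceptional divisor $E_j = \pi_j^{-1}(Y_{j-1})$; the base case is $g_0 = g$. After $k$ steps, $g_k$ is then a strong KT metric on the smooth complex manifold $\tilde M$ satisfying $g_k = \pi^*g$ away from a neighborhood of the full exceptional set $E = \pi^{-1}(S)$, so that $(\tilde M, g_k, \pi)$ is the required strong KT resolution.

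For the inductive step I would work in an orbifold chart $U/G$ meeting $Y_{j-1}$, with $G \subset U(n)$ finite acting linearly on $U \subset \C^n$. The metric $g_{j-1}$ lifts to a $G$-invariant strong KT metric on $U$ and $Y_{j-1}$ lifts to a $G$-invariant smooth complex submanifold $\widetilde Y \subset U$. Every ingredient in the proof of Proposition \ref{blowupsubmanifold}---the line bundle $L$ on the blow-up of $U$ along $\widetilde Y$, the Hermitian metric $h$ on $\mathcal{O}_{\Proj(\mathcal{N}_{\widetilde Y\vert U})}(1)$, its extension $\hat h$ via a partition of unity, and the form $\tilde F = \pi_j^* F + \epsilon\,\hat\omega$ for $\epsilon>0$ small enough---can be chosen $G$-equivariantly by averaging over the finite group $G$, since averaging commutes with $\partial$ and $\overline\partial$ and preserves positivity. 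The resulting $G$-invariant strong KT form descends to the blow-up of the orbifold chart and agrees with $\pi_j^*\,g_{j-1}$ outside a prescribed neighborhood of $E_j$, so the local modifications glue with $\pi_j^*\,g_{j-1}$ into a global strong KT metric $g_j$ on $M_j$.

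The principal obstacle is the orbifold nature of the intermediate spaces: Proposition \ref{blowupsubmanifold} is stated and proved on smooth complex manifolds, whereas the $M_j$ for $j<k$ can still carry singularities, and in fact the local isotropy groups change from step to step as previous singularities are only partially resolved. The induction therefore requires re-establishing the equivariance in each orbifold chart at every stage; however, because all the relevant constructions (Chern form, partition of unity, convex combination with $\pi_j^* F$) are linear in character, averaging over the current local isotropy group is harmless and the argument closes.
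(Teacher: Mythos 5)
Your overall strategy (Hironaka plus the blow-up results of Section \ref{blowup}) is the one advertised in the introduction, but it is not the route the paper actually takes, and as written your inductive step has a genuine gap. The paper's definition of a complex orbifold makes the singularities locally of the form $U/G_p$ with $p$ the only fixed point, so the proof works one singular point at a time: it applies Hironaka once, as a black box, to the affine variety $X=\C^n/G_p$ to obtain a smooth quasi-projective resolution $\pi_X:\tilde X\to X$, glues $\tilde U=\pi_X^{-1}(U_p)$ into $M\setminus\{p\}$ to form $\tilde M$, and then defines the metric in a single step as $\tilde F=\pi_X^*F+\epsilon\, i\,\partial\overline{\partial}(h\,\iota^*\rho)$ with $h$ a cut-off and $\rho(z)=\sum_j z_j\overline{z}_j$. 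No metric is ever placed on the intermediate stages of Hironaka's blow-up sequence, and no equivariant version of Proposition \ref{blowupsubmanifold} is invoked.

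The gap in your induction is precisely at those intermediate stages. The varieties $M_1,\dots,M_{k-1}$ produced by Hironaka are blow-ups of a \emph{singular} variety along centers that necessarily meet (indeed typically lie inside) the singular locus; they are not known to be orbifolds, so the inductive hypothesis ``an orbifold strong KT metric $g_j$ on $M_j$'' is not even well defined, and Proposition \ref{blowupsubmanifold} --- which requires a smooth ambient manifold, a smooth compact center, and a strong KT metric defined on all of the ambient space --- does not apply to them, equivariantly or otherwise. Your closing remark that ``averaging over the current local isotropy group is harmless'' presupposes exactly the local finite-quotient structure whose existence at the intermediate stages is the missing point. To repair the argument you would either have to prove that each $M_j$ is again an orbifold and extend the blow-up propositions to orbifold centers inside the singular set (a nontrivial strengthening), or do what the paper does: bypass the intermediate stages entirely, use only the final smooth model of each local chart, observe that $\pi_X^*F$ is a positive semi-definite $\partial\overline{\partial}$-closed $(1,1)$-form degenerate only along the exceptional set $E=\pi_X^{-1}(0)$, and correct it there by adding a small compactly supported $\partial\overline{\partial}$-exact term.
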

\begin{proof} Let $p\in S$ be a singular point of $M$. Take a chart $U_p=\B^n(r)/G_p$, where $\B^n(r)\subset\C^n$ is the ball of
radius $r$ in $\C^n$. Then $X=\C^n/G_p$ is an affine algebraic
variety which has the origin as the only singular point. By
Hironaka (see \cite{Hiro}), there exists a resolution
$\pi_X:\tilde{X}\to X$ which is a quasi-projective variety and it
is obtained by a finite sequence of blow-ups. The set
$E=\pi_X^{-1}(0)$ is a complex submanifold of $\tilde{X}$. Set
$\tilde{U}=\pi^{-1}_X(U_p)$. By identifying $\tilde{U}\setminus E$
with $U_p\setminus \{p\}$, define
$$
\tilde{M}=\left(M\setminus\{p\}\right)\cup\tilde{U}\,.
$$
Now, we define a strong KT metric on $\tilde{M}$ that coincides
with $\pi^*g$ on the complement of a neighborhood of the
exceptional set $E$. \newline Let $\rho :\C^n\to\R$ be the
function defined by $\rho(z)=\sum_{j=1}^n z_j\overline{z}_j$,
$\omega_0=i\,\partial\overline{\partial}\,\rho$ be the standard
K\"ahler form in $\C^n$ and $\iota :\B^n(r)\hookrightarrow \C^n$.
Let $h$ be a non-negative real valued ${\mathcal
C}^\infty$-function such that
\begin{eqnarray*}
h \equiv 1 &{}& \hbox{\rm on}\,\, \B^n\left(\frac13r\right)/G_p\\[3pt]
h \equiv 0 &{}& \hbox{\rm on}\,\, \left(\B^n(r)\setminus
\B^n\left(\frac{2}{3}r\right)\right)/G_p\,.
\end{eqnarray*}
Let $\epsilon\in\R$ and
$$
\tilde{F} =\pi^*_X F +
\epsilon\,i\,\partial\overline{\partial}\,(h \, \iota^*\rho)\,.
$$
Then, $\tilde{F}$ is a $(1,1)$-form on $\tilde{M}$, it is positive
if $\epsilon$ is small enough and satisfies
$\partial\overline{\partial}\,\tilde{F}=0$. It is clear that
$\tilde{F}=\pi^*_XF$ on the complement of a neighborhood of $E$.
The theorem is thus proved.
\end{proof}
We will obtain some applications of Theorem \ref{resolution} in
the next sections. We will show that if even we start with a
nilmanifold with a big first Betti number we can get a new strong
KT manifold with a considerably smaller first Betti number.
\section{A simply-connected example}
We are going to construct a simply-connected strong KT resolution
for the quotient of the torus $\T^6$ by a suitable action of a
finite group.\newline Let $\T^6=\R^{6}/\Z^{6}$ be the standard
torus and denote by $(x_1,\ldots ,x_{6})$ global coordinates on
$\R^{6}$. Define
\begin{equation}\label{torusholomorphic}
\left\{
\begin{array}{lll}
\varphi^1 & = & dx_1 +i\left(f(x)dx_{3}+dx_{4}\right)\\[5pt]
\varphi^2 & = & dx_2 +i\,dx_{5}\\[5pt]
\varphi^3 & = & dx_3 +i\,dx_{6}\,,
\end{array}
\right.
\end{equation}
where $f:\R^{6}\to \R$ is a ${\mathcal C}^\infty$-function. By the
above expression, we easily get
\begin{equation}\label{complexdifferentialtorus}
\left\{
\begin{array}{lll}
d\varphi^1 &=&\,\frac{i}{2}\,df\wedge
(\varphi^3+\overline{\varphi}^3)\,,\\[5pt]
d\varphi^j &=& 0\,,\quad j=2,3\,.
\end{array}
\right.
\end{equation}
Taking in particular $f=f(x_{3},x_{6})$ we have
$$
df = \frac{\partial f}{\partial x_3} dx_3 +\frac{\partial
f}{\partial x_{6}} dx_{6} =\frac{1}{2}\frac{\partial f}{\partial
x_3} (\varphi^3+\overline{\varphi}^3) +\frac{1}{2i}\frac{\partial
f}{\partial x_{6}}(\varphi^3-\overline{\varphi}^3)\,.
$$
Therefore, if $f=f(x_3,x_{6})$ is $\Z^{6}$-periodic, then
\eqref{torusholomorphic} defines a complex structure $J$ on the
torus $\T^{6}=\R^{6}/\Z^{6}$.\newline Let $\sigma :\T^6\to \T^6$
be the involution induced by
$$
(x_1,\ldots ,x_6)\mapsto(-x_1,\ldots ,-x_6)\,.
$$
By choosing $f=f(x_3,x_{6})$ $\Z^{6}$-periodic and even, it
follows that $\sigma$ is $J$-holomorphic.
\newline
The set of singular points for the action of $\sigma$ on $\T^6$ is
given by
$$
S=\left\{ x+\Z^6\,\,\,\vert\,\,\, x\in\frac{1}{2}\Z^6\right\}
$$
and consequently $(M=\T^6/\langle\sigma\rangle,J)$ is a complex
orbifold. Note that $S$ is a set of $64$ points. \newline We have
$$
\sigma^*(\varphi^j)=-\varphi^j\,,\quad j=1,2,3\,.
$$
Denote by
$$
g
=\frac{1}{2}\sum_{j=1}^4\left(\varphi^j\otimes\overline{\varphi}^j+\overline{\varphi}^j\otimes\varphi^j\right)
$$
the natural $\sigma$-invariant Hermitian metric on $\T^6$ and by
$$
F=\frac{i}{2}\sum_{j=1}^3\varphi^j\wedge\overline{\varphi}^j
$$
the corresponding fundamental $2$-form.
\begin{prop}
$(\T^6/\langle\sigma\rangle, J, F)$ is a strong (non-K\"ahler) KT
orbifold.
\end{prop}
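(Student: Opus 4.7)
The plan is to establish two things: the strong KT identity $\partial\overline{\partial}F = 0$, and non-K\"ahlerness $dF \neq 0$. The paper has already set up that $J$ is integrable on $\T^6$, that $\sigma$ is $J$-holomorphic (so $M = \T^6/\langle\sigma\rangle$ is a complex orbifold), and that $\sigma^*\varphi^j = -\varphi^j$; the last identity implies that $g$ and $F$ are $\sigma$-invariant, so they descend to a Hermitian structure on $M$. Hence it suffices to verify both conditions on the covering torus $\T^6$.

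First I would decompose $F = \frac{i}{2}\sum_{j=1}^3 \varphi^j \wedge \overline{\varphi}^j$ and use $d\varphi^2 = d\varphi^3 = 0$ to reduce the problem to showing $\partial\overline{\partial}(\varphi^1 \wedge \overline{\varphi}^1) = 0$. The structural observation is that, since $f$ depends only on $(x_3,x_6)$ and $\varphi^3 = dx_3 + i\,dx_6$, the $(1,0)$-part $\partial f$ is a multiple of $\varphi^3$ and $\overline{\partial} f$ is a multiple of $\overline{\varphi}^3$; substituting into $d\varphi^1 = \frac{i}{2}df \wedge (\varphi^3 + \overline{\varphi}^3)$ kills the $(2,0)$ and $(0,2)$ components, leaving
\[
d\varphi^1 \;=\; \beta\,\varphi^3 \wedge \overline{\varphi}^3, \qquad \beta := \tfrac{1}{2}\tfrac{\partial f}{\partial x_6},
\]
a real $(1,1)$-form. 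Consequently $\partial\varphi^1 = 0$, $\overline{\partial}\varphi^1 = \beta\,\varphi^3\wedge\overline{\varphi}^3$, and by conjugation $\overline{\partial}\overline{\varphi}^1 = 0$, $\partial\overline{\varphi}^1 = -\beta\,\varphi^3\wedge\overline{\varphi}^3$.

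Next I would compute $\overline{\partial}(\varphi^1 \wedge \overline{\varphi}^1) = \beta\,\varphi^3\wedge\overline{\varphi}^3\wedge\overline{\varphi}^1$ and apply $\partial$ via the Leibniz rule. The coefficient term $\partial\beta \wedge \varphi^3\wedge\overline{\varphi}^3\wedge\overline{\varphi}^1$ vanishes because $\beta$ depends only on $(x_3,x_6)$, so $\partial\beta$ is a multiple of $\varphi^3$; the remaining term $\beta\,\varphi^3\wedge\overline{\varphi}^3\wedge\partial\overline{\varphi}^1 = -\beta^2\,\varphi^3\wedge\overline{\varphi}^3\wedge\varphi^3\wedge\overline{\varphi}^3$ vanishes by $\varphi^3 \wedge \varphi^3 = 0$. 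Thus $\partial\overline{\partial} F = 0$. For non-K\"ahlerness, choosing the admissible $f(x_3,x_6) = \cos(2\pi x_6)$ (which is $\Z^6$-periodic and even) yields $\beta = -\pi\sin(2\pi x_6) \not\equiv 0$, so $dF \neq 0$ and $F$ is not K\"ahler. The only possible obstacle is the combinatorial book-keeping of bidegrees and Leibniz signs, but the decisive simplification --- that $\beta$ lives on the $(x_3,x_6)$-plane, so its $\partial$ and $\overline{\partial}$ parts are absorbed by the factor $\varphi^3 \wedge \overline{\varphi}^3$ already present --- makes everything collapse, and no analytic input is needed.
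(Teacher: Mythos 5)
Your proof is correct and follows essentially the same route as the paper's: a direct computation showing that $\overline{\partial}F$ is proportional to $\frac{\partial f}{\partial x_6}\,\varphi^3\wedge\overline{\varphi}^3\wedge\overline{\varphi}^1$ and that applying $\partial$ produces only terms carrying $\varphi^3\wedge\varphi^3=0$, precisely because $f$ (hence your $\beta$) depends only on $(x_3,x_6)$ so its $(1,0)$-differential is a multiple of $\varphi^3$. Your explicit check of non-K\"ahlerness via the admissible choice $f=\cos(2\pi x_6)$ is a small addition that the paper leaves implicit, but it does not alter the argument.
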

\begin{proof} We need only to check that $\partial\overline{\partial}\,F=0$. By a direct computation, taking into
account \eqref{complexdifferentialtorus}, we get
\begin{eqnarray*}
\partial\overline{\partial}\,F &=& \frac{i}{4}\,\partial \left(\frac{\partial f}{\partial x_6}
\varphi^3\wedge\overline{\varphi}^3\wedge\overline{\varphi}^1\right)\\[3pt]
&=&\frac{i}{8}\left[\left(\frac{\partial^2 f}{\partial x_3\partial
x_6}-i\frac{\partial^2 f}{\partial x_6^2}\right)\varphi^3
\right]\wedge
\varphi^3\wedge\overline{\varphi}^3\wedge\overline{\varphi}^1\\[3pt]
&=& 0\,.
\end{eqnarray*}
\end{proof}
According to Theorem \ref{resolution} now we may resolve the
singularities of $\T^6/\langle\sigma\rangle$ in order to obtain a
simply-connected strong KT manifold $\tilde{M}$. More precisely,
for any singular point $p\in S$, we take the blow-up at $p$.

Applying the same argument used by Joyce as in \cite[Lemma 12.1.1
and 12.1.2]{J}, we get that the orbifold fundamental group
$\pi_1\left(\T^6/\langle\sigma\rangle \right)$ is abelian. Hence,
since
$$
b_1(\T^6/\langle\sigma\rangle)=\dim_\R\left\{\alpha\in\Lambda^1(\T^6)\,\,\,\vert\,\,\,\alpha
\,\, \hbox{\rm is harmonic and}\,\,\sigma\hbox{\rm
-invariant}\right\}=0\,,
$$
we deduce that the strong KT resolution $\tilde{M}$ of the
orbifold $\T^6/\langle\sigma\rangle$ is simply-connected.
Moreover, in a similar way we have
$$
b_{2 j + 1} (\T^6/\langle\sigma\rangle) =0 \quad {\mbox{and}}
\quad b_{2 j + 2} (\T^6/\langle\sigma\rangle) =b_{2 j + 2} (\T^6),
$$
for any $j =0, 1,2$.

\begin{rem}{\rm
The same construction can be generalized to the $2n$-dimensional
orbifold $\T^{2n}/\langle \sigma\rangle$, where the involution
$\sigma :\T^{2n}\to \T^{2n}$ is defined by
$$
\sigma\left((x_1,\ldots ,x_{2n})\right)=(-x_1,\ldots ,-x_{2n})
$$
and the complex structure on $\T^{2n}/\langle \sigma\rangle$ is
given by
\begin{equation}
\left\{
\begin{array}{lll}
\varphi^1 & = & dx_1 +i\left(f(x)dx_{n}+dx_{n+1}\right)\,,\\[10pt]
\varphi^2 & = & dx_2 +i\,dx_{n+2}\,,\\[10pt]
&\vdots&\\[10pt]
\varphi^n & = & dx_n +i\,dx_{2n}\,,
\end{array}
\right.
\end{equation}
where $f:\R^{2n}\to \R$ is an even, $\Z^{2n}$-periodic ${\mathcal
C}^\infty$-function of the two variables $(x_n,x_{2n})$. }
\end{rem}
\section{ $8$-dimensional examples}
We are going to construct two new $8$-dimensional compact examples
with first Betti number equal to one starting from the
Kodaira-Thurston surface \cite{Kod}, which is the only nilmanifold
of real dimension 4 admitting a strong KT structure besides the
$4$-dimensional torus ${\mathbb T}^4$. We will consider an action
of a finite group on the product of two Kodaira-Thurston surfaces
or on a product of a Kodaira-Thurston surface for $\T^4$
preserving the standard strong KT structures on the products. One
of the two previous actions is similar to the one considered in
\cite{FM} in the context of symplectic geometry.

\subsection{Product of a Kodaira-Thurston surfaces and a torus}
Consider on $\C^4$ the following product
\begin{eqnarray*}
(z_1, z_2, z_3, z_4) \star (w_1, w_2, w_3, w_4)& = & (z_1 + w_1, z_2 + w_2 + \frac 14 i (z_1 \overline w_1 - \overline z_1 w_1), \\
&{}& z_3 + w_3, z_4 + w_4 ),
\end{eqnarray*}
for any $z_j, w_j \in \C$, $j=1, 2, 3, 4$.

The corresponding real nilpotent Lie group $N$ is the product $H_3
\times \R^5$, where $H_3$ is the real $3$-dimensional Heinsenberg
group and it has a left-invariant complex $J$ defined by the
$(1,0)$-forms
$$
\varphi^1 = d z_1, \quad \varphi^2 = dz_2 - \frac 14 i (z_1
d\overline {z}_1 - \overline z_1dz_1), \quad \varphi^3 = dz_3,
\quad\varphi^4=dz_4 \,.
$$

Let $\Lambda$ be the lattice generated by $1$ and $\xi = e^{ \frac
{2 \pi i} {3}}$ and consider the discrete subgroup $\Gamma \subset
N$ formed by the elements $(z_1, z_2, z_3, z_4)$ such that $z_1,
z_2, z_3, z_4 \in \Lambda$. The compact quotient $(M = \Gamma
\backslash N, J)$ is a complex nilmanifold and it can be viewed as
a principal torus bundle
$$
{\mathbb T}^2 = \C / \Lambda \rightarrow M \rightarrow {\mathbb
T}^6 = \left(\C / \Lambda \right)^3.
$$
$M$ has a natural strong KT metric compatible with $J$ defined by
the $(1,1)$-form
$$
F = \frac{i}{2} \sum_{j = 1}^4 \varphi^j \wedge \overline
\varphi^j,
$$
since
$$
\begin{array}{l}
d \varphi^ j = 0\,, j = 1,3,4,\\[5pt]
d \varphi^2 = - \frac 12 i \, \varphi^1 \wedge \overline
\varphi^1.
\end{array}
$$

Consider the following action of the finite group $\Z_3$
$$
\begin{array}{c}
\lambda: N \rightarrow N, \\[3pt]
(z_1, z_2, z_3, z_4) \mapsto (\xi z_1, z_2, \xi z_3, \xi z_4)\,.
\end{array}
$$
One has that $\lambda (a \star b) = \lambda (a) \star \lambda
(b)$, for any $a, b \in N$. Moreover, $\lambda (\Gamma)= \Gamma$
and therefore there is an induced action $\lambda$ on the quotient
$M$. Since the action on the $(1,0)$-forms is given by
\begin{equation}\label{invariantformskodairatori}
\lambda^* \varphi^1 = \xi \varphi^1, \quad \lambda^* \varphi^2 =
\varphi^2, \quad \lambda^* \varphi^3 = \xi \varphi^3, \quad
\lambda^* \varphi^4 = \xi \varphi^4,
\end{equation}
the $(1,1)$-form $F$ is $\Z_3$-invariant and therefore it induces
a strong KT metric on the complex orbifold $\hat M = M / \Z_3$.

By Theorem \ref{resolution} there exists a smooth compact strong
KT manifold $(\tilde M, \tilde F)$ which is biholomorphic to
$(\hat M, F)$ outside the singular points.

For any singular point $p \in \hat M$, that we can consider as
$(0,0,0,0)$ in our coordinates by translating by an element of the
group $N$, we resolve the singularity of $\B^4(r) / \Z_3$ with a
non-singular K\"ahler model, blowing up $\B^4(r)$ to get $\tilde
B$. In this way we replace the point with a complex projective
space $\CP^3$ in which $\Z_3$ acts as
$$
[z_1, z_2, z_3, z_4] \to [\xi z_1, z_2, \xi z_3, \xi z_3] = [z_1,
\xi^2 z_2, z_3, z_4].
$$
then there are two components of the fix-point locus of the
$\Z_3$-action on $\tilde B$: the point $q = [0,1, 0,0]$ and the
complex projective plane
$$
\Proj \left \{ [z_1, 0, z_3, z_4] \right \} \subset \CP^3\,.
$$
The resolution of $\B^4 (r)/ \Z_3$ is then obtained blowing-up
$\tilde B$ at $q$ and $\Proj \left \{ [z_1, 0, z_3, z_4] \right
\}$ and doing the quotient by the action of $\Z_3$. \newline
Consider the $\Z_3$-invariant Hermitian metric on $M$ defined by
$$
g
=\frac{1}{2}\sum_{j=1}^4\left(\varphi^j\otimes\overline{\varphi}^j+\overline{\varphi}^j\otimes\varphi^j\right)\,.
$$
By \eqref{invariantformskodairatori} we easily obtain that the
only harmonic $\Z_3$-invariant $1$-form on $M$ is $dy_2$, where we
denote $z_2 =x_2+iy_2$. Since $M = \Gamma \backslash N$ is a
compact nilmanifold, we have that the De Rham cohomology is just
given by harmonic left-invariant forms and
$$
b_k(M /
\Z_3)=\dim_\R\left\{\alpha\in\Lambda^k(M)\,\,\,\vert\,\,\,\alpha
\,\,\hbox{\rm is harmonic and}\,\,\Z_3\hbox{\rm
-invariant}\right\}\,.
$$
Therefore, we conclude that
$$b_1(M / \Z_3) =1.
$$
As in \cite{FM} then one has that
\begin{equation}\label{b1}
H^1(\tilde M) = H^1 (M/ \Z_3) \oplus \left(\bigoplus_i H^1 (E_i)
\right)
\end{equation}
where $E_i$ is the exceptional divisor corresponding to each fixed
point $p_i$ and therefore $b_1 (\tilde M) = b_1(M / \Z_3) =1$.
Therefore the topology of the new strong KT manifold is simpler
with respect to the one of $M$ since by using Nomizu's Theorem
\cite{Nom} we had $b_1(M) = 7$ for the nilmanifold $M$.
\subsection{Product of two Kodaira-Thurston surfaces}
Consider on $\C^8$ the following product
$$
\begin{array}{lcl}
(z_1, z_2, z_3, z_4) \star(w_1, w_2, w_3, w_4) &=&\left( z_1 + w_1, z_2 + w_2 + \frac 14 i (z_1 \overline w_1 - \overline z_1 w_1),\right.\\[3pt]
&{}& \left.z_3 + w_3, z_4 + w_4 + \frac 14 i (z_3\overline w_3 -
\overline z_3 w_3) \right),
\end{array}
$$
for any $z_j, w_j \in \C$, $j=1, 2, 3, 4$.

The corresponding real nilpotent Lie group $N$ is the product $H_3
\times H_3 \times \R^2$ and it has a left-invariant complex $J$
defined by the $(1,0)$-forms
$$
\begin{array}{l}
\varphi^1 = d z_1, \quad \varphi^2 = dz_2 - \frac 14 i (z_1
d\overline
{z}_1 - \overline z_1dz_1), \\[4pt]
 \varphi^3 = dz_3, \quad\varphi^4=dz_4 - \frac 14 i (z_3 d\overline
{z}_3 - \overline z_3dz_3)\,.
\end{array}
$$

Let $\Lambda$ be the lattice generated by $1$ and $\xi = e^{ \frac
{2 \pi i} {3}}$ and consider the discrete subgroup $\Gamma\subset
N$ formed by the elements $(z_1, z_2, z_3, z_4)$ such that $z_1,
z_2, z_3, z_4 \in \Lambda$. The compact quotient $(M = \Gamma
\backslash N, J)$ is a complex nilmanifold and it can be viewed as
a principal torus bundle
$$
{\mathbb T}^2 = \C / \Lambda \rightarrow M \rightarrow {\mathbb
T}^6 = \left(\C / \Lambda \right)^3.
$$
$M$ has a natural strong KT metric compatible with $J$ defined by
the $(1,1)$-form
$$
F = \frac{i}{2} \sum_{j = 1}^4 \varphi^j \wedge \overline
\varphi^j,
$$
since
$$
\begin{array}{l}
d \varphi^ j = 0\,,\quad j = 1,3\,,\\[5pt]
d \varphi^2 = - \frac 12 i \, \varphi^1 \wedge \overline \varphi^1,\\[5pt]
d \varphi^4 = - \frac 12 i \, \varphi^3 \wedge \overline
\varphi^3.
\end{array}
$$

Consider the following action of a finite group $G$
$$
\begin{array}{c}
\lambda: N \rightarrow N, \\[3pt]
(z_1, z_2, z_3, z_4) \mapsto (\xi z_3, z_4, \xi z_1, z_2)\,.
\end{array}
$$
One has that $\lambda (a \star b) = \lambda(a) \star \lambda (b)$,
for any $a, b \in N$. Moreover, $\lambda (\Gamma)= \Gamma$ and
therefore there is an induced action $\lambda$ on the quotient
$M$. Since the action on the $(1,0)$-forms is given by
\begin{equation}\label{invariantformskodairakodaira}
\lambda^* \varphi^1 = \xi \varphi^3, \quad \lambda^* \varphi^2 =
\varphi^4, \quad \lambda^* \varphi^3 = \xi \varphi^1, \quad
\lambda^* \varphi^4 = \varphi^2,
\end{equation}
the $(1,1)$-form $F$ is $G$-invariant under the previous action
and therefore it induces a strong KT metric on the complex
orbifold $\hat M = M /G$.

By Theorem \ref{resolution} there exists a compact strong KT
resolution $(\tilde M, \tilde F)$ which is biholomorphic to $(\hat
M, F)$ outside the singular set.

By using \eqref{b1} and the same argument as for the previous
example we get
$$
b_1(M /G) =1 = b_1 (\tilde M)\,
$$
while for the manifold $M$ we had $b_1(M) = 6$ by \cite{Nom}.


\begin{thebibliography}{12}
\bibitem{AB} L. Alessandrini, G. Bassanelli, Plurisubharmonic currents and their exstension across analytic
subsets, {\em Forum Math.} {\bf 5} (1993) 291--316.
\bibitem{AI} B. Alexandrov, S. Ivanov S, Vanishing theorems on Hermitian manifolds, {\em Diff. Geom. Appl.} {\bf 14}
(2001) 251--265.
\bibitem{AG} V. Apostolov, M. Gualtieri, Generalized K\"ahler manifolds with split tangent
bundle, {\em Comm. Math. Phys.} {\bf 271} (2007) 561--575.
\bibitem{B} G. Bassanelli, A cut-off theorem for plurisubharmonic currents, {\em Forum Math.} {\bf 6}
(1994) 567--595.
\bibitem{BG} C. Benson, C.~S. Gordon, K\"ahler and symplectic structures on
nilmanifolds, {\em Topology} {\bf 27} (1988) 513--518.
\bibitem{BL} M. Berger, A. Lascoux, {\em Variet\' es K\" ahl\'eriennes compactes},
Springer Lectures Notes in Math. {\bf 154}, Springer Verlag, New
York, 1970.
\bibitem{Bi} J.~M. Bismut, A local index theorem of non-K\"ahler manifolds, {\em Math. Ann.} {\bf 284} (1989) 681--699.
\bibitem{Bl} A. Blanchard, Sur les vari\'et\'es analtyques complexe, {\em Ann. Scient. de l'E.N.S.} {\bf 73} (1956) 157--202.
\bibitem{CFM} G.~R. Cavalcanti, M. Fern\'{a}ndez, V. Mu\~{n}oz, Symplectic resolutions, Lefschetz property
and formality, {\em Adv. Math.} {\bf 218} (2008) 576--599.
\bibitem{DGMS} P. Deligne, P. Griffiths, J. Morgan, D. Sullivan, Real homotopy theory of K\"ahler manifolds, {\em Invent. Math.}
{\bf 29} (1975) 245--274.
\bibitem{Eg} N. Egidi, Special metrics on compact complex manifolds, {\em Diff. Geom. Appl.} {\bf 14} (2001) 217--234.
\bibitem{Dem} J.~P. Demailly, {\em Complex Analytic and Differential Geometry},\newline
{\tt
http://www-fourier.ujf-grenoble.fr/~demailly/manuscripts/agbook.pdf}
\bibitem{FM} M. Fern\'{a}ndez, V. Mu\~{n}oz, An $8$-dimensionall non-formal simply-connected symplectic manifold,
{\tt arXiv:math/0506449}, to appear in {\em Ann. of Math.}.
\bibitem{FG} A. Fino, G. Grantcharov, Properties of manifolds with skew-symmetric torsion and special holonomy, {\em Adv. Math.}
{\bf 189} (2004) 439--450.
\bibitem{FPS} A. Fino, M. Parton, S. Salamon, Families of strong
KT structures in six dimensions, {\em Comment. Math. Helv.}
{\bf79} (2004) 317--340.
\bibitem{FT} A. Fino, A. Tomassini, Non K\" ahler solvmanifolds with generalized K\"ahler
structure, {\tt arXiv:math/0702164}, to appear in {\em J. Sympl.
Geom}..
\bibitem{GHR} S.~J. Gates, C.~M. Hull, M. Ro\v cek, Twisted multiplets and new supersymmetric nonlinear sigma models,
{\em Nuc. Phys. B} {\bf 248} (1984) 157--186.
\bibitem{Ga} P. Gauduchon, La $1$-forme de torsion d'une vari\'et\'e hermitienne compacte, {\em Math. Ann.}
{\bf 267} (8) (1984) 495--518.
\bibitem{Ga2} P. Gauduchon, Hermitian connnections and Dirac operators, {\em Boll. Un. Mat. It.} {\bf 11-B} (1997)
Suppl. Fasc. 257--288.
\bibitem{Gu} M. Gualtieri, Generalized complex geometry, DPhil thesis, University of Oxford, 2003, {\tt arXiv:math.DG/0401221}.
\bibitem{HL} R. Harvey, J.~B. Lawson, An intrinsic charactherization of K\" ahler manifolds,
{\em Inv. Math.} {\bf 74} (1983) 169--198.
\bibitem{Ha2} K. Hasegawa, Minimal models of nilmanifolds, {\em Proc. Amer. Math. Soc.} {\bf 106} (1989) 65--71.
\bibitem{Hiro} H. Hironaka, Resolution of singularities of an algebraic variety over a field of characteristic
zero, I, II, {\em Ann. of Math.} (2) {\bf 79} (1964) 109--203;
ibid. (2) {\bf 79} (1964) 205--326.
\bibitem{Hi2} N.~J. Hitchin, Instantons and generalized K\"ahler geometry, {\em Comm. Math. Phys.} {\bf 265} (2006) 131--164.
\bibitem{Ho} L. H\"ormander, {\em The analysis of Linear Partial Differential Operators I}. Grundlehren der math.
Wissenshaften {\bf 256}, Springer-Verlag, Berlin 1983.
\bibitem{J} D.~D. Joyce, {\em Compact Manifolds with Special Holonomy}, Oxford Mathematical Monograph,
Oxford university Press, 2000.
\bibitem{Kod} K. Kodaira, On the structure of compact complex analytic surfaces I, {\em Am. J. Math.}
{\bf 86} (1964) 751--798; II, {\em Am. J. Math.} {\bf 88} (1966)
682--721.
\bibitem{KS} K. Kodaira K., D.~C. Spencer, On deformations of complex analytic
structures. III. Stability theorems for complex structures, {\em
Ann. of Math.} (2) {\bf 71} (1960) 43--76.
\bibitem{LT} Y. Lin, S. Tolman, Symmetries in generalized K\"ahler geometry,
{\em Comm. Math. Phys.} {\bf 268} (2006) 199--222.
\bibitem{Lau} J. Lauret, A canonical compatible metric for geometric structures on
nilmanifolds, {\em Ann. Global Anal. Geom.} {\bf 30} (2006)
107--138.
\bibitem{Mal} A.~I. Malcev, On a class of homogeneous spaces, {\em Amer. Math. Soc. Translation Ser. 1},
{\bf 9} (1962) 276--307.
\bibitem{Mi} Y. Miyaoka, Extension theorems for K\"ahler metrics, {\em Proc. Japan Acad.} {\bf 50} (1974) 407--410.
\bibitem{Nom} K. Nomizu, On the cohomology of compact homogeneous spaces of
nilpotent Lie groups, {\em Ann. of Math.} {\bf 59} (1954)
531--538.
\bibitem{Satake} I. Satake, On a generalization of the notion of manifold,
{\em Proceedings of the National academy of Sciences of the U.S.A}
{\bf 42} (1957) 359--363.
\bibitem{Si} Y.~-T. Siu, Analyticity of Sets associated to Lelong Numbers and ththe Extension of Closed Positive Currents,
{\em Invent. Math.} {\bf 27} (1974) 53--156.
\bibitem{SSTV} Ph. Spindel, A. Sevrin, W. Troost, A. Van Proyen, Complex structures on
parallelised group manifolds and supersymmetric $\sigma$-models,
{\em Phys. Lett. B} {\bf 206} (1988) 71--74.
\bibitem{Str} A. Strominger, Superstrings with torsion, {\em Nuclear Phys. B}
{\bf 274} (1986) 253--284.
\bibitem{Sul} D. Sullivan, Differential forms and the topology of manifolds. Manifolds, Tokyo
1973 (Proc. Internat. Conf., Tokyo, 1973), Univ. Tokyo Press,
Tokyo, 1975 37--49.
\bibitem{Ug} L. Ugarte, Hermitian structures on six-dimensional nilmanifolds, {\em Transform. Groups} {\bf 12} (2007) 175--202.
\bibitem{Va} J. Varouchas, {\em Propri\'{e}t\'{e}s cohomologiques d'une classe de vari\'{e}t\'{e}s analytiques complexes
compactes}, Sem. d'Analyses Lelong-Dolbeault-Skoda 1983-84,
Lecture Notes in Math. Vol. {\bf 1198}, Springer-Verlag, Berlin
1985, pp. 233-243.
\end{thebibliography}
\end{document}